\theoremstyle{plain}
\newtheorem{thm}[equation]{Theorem}
\newtheorem{corr}[equation]{Corollary}
\newtheorem{prop}[equation]{Proposition}
\newtheorem{lemma}[equation]{Lemma}
\newtheorem{claim}[equation]{Claim}
\theoremstyle{definition}
\newtheorem{conj}[equation]{Conjecture}
\theoremstyle{remark}
\newtheorem{rem}[equation]{Remark}
\numberwithin{equation}{subsection}
\newcommand{\hdot}{{\:\raisebox{2pt}{\text{\circle*{1.5}}}}}
\newcommand{\idot}{{\:\raisebox{2pt}{\text{\circle*{1.5}}}}}
\newcommand{\hd}{{\:\raisebox{2pt}{\text{\circle*{1.5}}}}}
\DeclareMathOperator{\Tor}{{\mathrm{Tor}}}
\DeclareMathOperator{\Ext}{{\mathrm{Ext}}}
\DeclareMathOperator{\coh}{\mathrm{Coh}}
\DeclareMathOperator{\End}{{\mathrm{End}}}
\DeclareMathOperator{\Hom}{{\mathrm{Hom}}}
\DeclareMathOperator{\ttor}{{\scr{T}\!\textit{or}}}
\DeclareMathOperator{\hoch}{{\mathrm{Hoch}}}
\DeclareMathOperator{\Spec}{{\mathrm{Spec}}}
\newcommand{\beq}{\begin{equation}\label}
\newcommand{\eeq}{\end{equation}}
\newcommand{\iso}{{\;\stackrel{_\sim}{\to}\;}}
\newcommand{\cd}{\!\cdot\!}
\DeclareMathOperator{\alt}{\mathrm{Alt}}
\def\ccirc{{{}_{\,{}^{^\circ}}}}
\newcommand{\bo}{\mbox{$\bigotimes$}}
\renewcommand{\o}{\otimes }
\newcommand{\La}{{\mathsf{\Lambda}}}
\newcommand{\Id}{{\operatorname{Id}}}
\newcommand{\oo}{{\mathcal O}}
\newcommand{\eps}{{\varepsilon}}
\newcommand{\scr}[1]{\mathscr{#1}}
\def\ccirc{{{}_{^{\,^\circ}}}}
\newcommand{\be}{\beta }
\newcommand{\bra}{\{-,-\}}
\newcommand{\de}{{\delta}}
\newcommand{\al}{{\alpha}}
\newcommand{\cat}{{{\scr C}at_{\!}}}
\renewcommand{\aa}{{\mathcal A}}
\newcommand{\bb}{{\mathcal B}}
\newcommand{\cc}{{\mathcal C}}
\newcommand{\vi}{${\enspace\sf {(i)}}\;$}
\newcommand{\vii}{${\;\sf {(ii)}}\;$}
\newcommand{\viii}{${\sf {(iii)}}\;$}
\newcommand{\sset}{\subset}
\newcommand{\into}{\,\,\hookrightarrow\,\,}
\newcommand{\too}{\,\,\longrightarrow\,\,}
\newcommand{\mto}{\mapsto}
\newcommand{\onto}{\,\,\twoheadrightarrow\,\,}
\newcommand{\C}{{\mathbb{C}}}
\newcommand{\Z}{{\mathbb{Z}}}
\renewcommand{\k}{{\mathbb C}}
\newcommand{\kep}{\mathbb{C}_{\varepsilon}}
\newcommand{\ep}{\varepsilon}
\title{Gerstenhaber-Batalin-Vilkoviski structures on coisotropic intersections}
\author{Vladimir Baranovsky and Victor Ginzburg}
\begin{document}
\maketitle

\abstract{Let $Y,Z$ be a pair of smooth
coisotropic subvarieties in a smooth algebraic Poisson
variety $X$. We show that any data of first order deformation
of the structure sheaf $\oo_X$ to a sheaf of noncommutative algebras and
of the sheaves $\oo_Y$ and $\oo_Z$ to sheaves of right
and left modules over the deformed algebra, respectively, 
gives rise to a Batalin-Vilkoviski algebra structure
on the Tor-sheaf ${\scr{T}\!}or^{\oo_X}_\idot(\oo_Y,\oo_Z)$.
The induced Gerstenhaber bracket on the Tor-sheaf turns out to be
canonically defined; it is independent of the choices of deformations
involved. There are similar results for Ext-sheaves as well.

Our construction is motivated by, and is closely related to,
a result of Behrend-Fantechi \cite{BF}, who studied intersections
of Lagrangian submanifolds in a symplectic manifold.}

\section{Introduction}
\subsection{Main result}
Let $\mathbb{C}$ be a field of characteristic zero. We let
$\kep:= \mathbb{C}[\eps]/(\eps^2)$ denote the ring of dual numbers
and let all unlabeled tensor products stand for $\o_{\mathbb{C}}$.
Given an algebraic variety $X$,
we write $\oo_X$ for the structure sheaf, resp. $T_X$
for the tangent sheaf on $X$.

Fix  a smooth algebraic  variety $X$, over $\mathbb{C}$,
and  $P \in H^0(X, \La^2 T_X)$, a  Poisson bivector.
Thus,  there is   a Poisson $\bra: \mathcal{O}_X\times\mathcal{O}_X
\to\mathcal{O}_X$ given by the
formula $\{f,g\}=\langle P, df\wedge dg\rangle.$

Let $\mathcal{A}$ be a sheaf of (not necessarily commutative)
$\kep$-algebras
equipped with an algebra isomorphism 
 $\mathcal{A}/\ep \mathcal{A} \iso\mathcal{O}_X$
so that $\mathcal{A}$
gives a  flat deformation of 
the structure sheaf $\mathcal{O}_X$.
We require, in addition, that the  Poisson
bracket  induced by the commutator
in $\mathcal{A}$ be equal to the bracket $\{-,-\}$.
A particular example of such a deformation is the sheaf
$\mathcal{A}:= \kep\o \oo_X=\oo_X\oplus\eps\oo_X,$
equipped with multiplication given by the 
 well-known formula
$f\times g\mto f*g=fg+\frac{\eps}{2}\{f,g\}$,
for any $f,g\in \oo_X$.

Let $Z\sset X$ be a  smooth subvariety.
In this paper, we are interested in flat deformations of 
the sheaf $\oo_Z$, viewed as 
an $\oo_X$-module supported on $Z$, to either left or right $\aa$-module
$\cc$  set theoretically supported on $Z$.  
Associated with such a deformation
$\cc$ to a left  $\aa$-module, there is a \textit{transposed} 
deformation $\cc^t$, which gives a right $\mathcal{A}$-module,
see section \ref{transposed} for the definition of
 $\cc^t$.

Next,
let $Y,Z\sset X$ be a pair of smooth subvarieties.
Then, $Y\cap Z$, a {\em scheme theoretic} intersection,
is a closed subscheme of $X$ with structure sheaf
$\oo_{Y\cap Z}:=\mathcal{O}_Y\bo_{\mathcal{O}_X}
\mathcal{O}_Z.$   More generally, we  have
${\scr{T}\!}or^{\mathcal{O}_X}_\idot(\mathcal{O}_Y, 
\mathcal{O}_Z)$, a coherent sheaf of supercommutative graded
$\oo_{Y\cap Z}$-algebras. 
Similarly one has a  sheaf
$\scr{E}^{\!}xt_{\mathcal{O}_X}^\hd (\mathcal{O}_Y, \mathcal{O}_Z)$
that comes equipped with the natural structure
of a graded ${\scr{T}\!}or^{\mathcal{O}_X}_\idot(\mathcal{O}_Y, 
\mathcal{O}_Z)$-module (the module structure is recalled 
in Section 3.2).

Recall that a graded commutative algebra $D=\bigoplus_{k\geq 0} D_k$
equipped with an operator $\de: D_\idot\to D_{\idot-1}$
is called a Batalin-Vilkovisky (BV) algebra if $\de$ is a differential
operator of order $\leq 2$ (with respect to
multiplication in $D$) and one has $\de^2=0$.
In this case, the formula (for $x,y$ homogeneous):
\beq{bracket}
[x, y] := \delta(x\cd y) - \delta(x)\cd  y - (-1)^{\deg x}x\cd  \delta(y),
\eeq
provides $D_\idot$  with a
structure of  Gerstenhaber algebra (i.e., odd Poisson algebra).
See e.g. \cite{BF} for more details on these definitions.

Similarly, given a graded $D$-module $M=\bigoplus_{k\geq 0} M_k$,
 a BV-module structure on $M$
is the data of a linear operator $\de': M_\idot\to M_{\idot-1}$
such that $(\de')^2=0$ and such that $\de'$ has  order $\leq 2$
in the sense that
for any homogeneous $x, y \in D$ and $m \in M$, the following equation
holds
\begin{multline*}
\delta(xy) m - (-1)^{\deg x}x \delta(y)m - \delta(x) y m =  
\delta'(xym) - \\ - (-1)^{\deg x} x \delta'(ym) - 
(-1)^{\deg x \deg y + \deg y} 
y \delta'(xm) + (-1)^{\deg x + \deg y}xy \delta'(m).
\end{multline*}
In such a case,  an analogue of formula \eqref{bracket} (for $\de'$
instead of $\de$) gives a pairing
$\bra:\ D\o M \to M$ that makes $M$ a Gerstenhaber module over $D$.

The main result of this paper reads

\begin{thm}\label{BV}  Let $X$ be a smooth Poisson
variety with Poisson bivector $P$,
 and let $\mathcal{A}$ be a flat $\kep$-deformation 
of $\oo_X$ such that
the commutator in $\mathcal{A}$ induces the Poisson bracket
given by $P$.
Let $Y, Z$ be  a pair of smooth 
coisotropic subvarieties in a smooth Poisson variety $X$. 
Then, we have

\vi Associated with the data of a
flat $\kep$-deformation of the
sheaf $\oo_Y$  to a right
$\mathcal{A}$-module $\bb$ and of the
sheaf  $\oo_Z$ to a left
$\mathcal{A}$-module $\cc$, there is a second order differential
operator $\de: {\scr{T}\!}or^{\mathcal{O}_X}_\idot (\mathcal{O}_Y,
\mathcal{O}_Z)\to {\scr{T}\!}or^{\mathcal{O}_X}_{\idot-1}
(\mathcal{O}_Y, \mathcal{O}_Z)$ that squares to zero
(i.e. makes ${\scr{T}\!}or^{\mathcal{O}_X}_\idot (\mathcal{O}_Y,
\mathcal{O}_Z)$  a  BV algebra) provided
the first order deformations locally admit extensions to second
order deformations.

\vii The induced  bracket \eqref{bracket} on
${\scr{T}\!}or^{\mathcal{O}_X}_\idot (\mathcal{O}_Y,
\mathcal{O}_Z)$
is independent of the choice of  deformations $\bb$ and $\cc$.

\viii Similarly, given an additional flat $\kep$-deformation
of $\oo_Z$ to a {\em right} $\aa$-module $\cc'$,
there is an associated second order differential
operator
$\delta': \
\scr{E}^{\!}xt^\hdot_{\mathcal{O}_X} (\mathcal{O}_Y, \oo_Z)\to
\scr{E}^{\!}xt^{\hdot-1}_{\mathcal{O}_X} (\mathcal{O}_Y, \oo_Z)$,
such that $(\de')^2=0$ if the first order
deformations locally admit extensions to second 
order deformations. If $\cc' = \cc^t$ 
the corresponding operator $\delta'$ provides
the sheaf
 $\scr{E}^{\!}xt^\hdot_{\mathcal{O}_X} (\mathcal{O}_Y, 
\oo_Z)$ with a structure of BV-module over
the BV algebra ${\scr{T}\!}or^{\mathcal{O}_X}_\idot (\mathcal{O}_Y,
\mathcal{O}_Z)$.
Moreover, the resulting pairing
$$\bra:\
{\scr{T}\!}or^{\mathcal{O}_X}_\idot (\mathcal{O}_Y,
\mathcal{O}_Z)\times\scr{E}^{\!}xt^\hdot_{\mathcal{O}_X} (\mathcal{O}_Y, 
\oo_Z)\to
\scr{E}^{\!}xt^\hdot_{\mathcal{O}_X} (\mathcal{O}_Y, 
\oo_Z)$$
is independent of the choice of deformations $\bb$ and $\cc$.
\end{thm}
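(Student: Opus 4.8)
The plan is to reduce the whole construction to a local computation with Koszul-type resolutions and then to realize both $\delta$ and $\delta'$ as \emph{Bockstein homomorphisms} for the $\eps$-adic filtration. Since $Y,Z$ are smooth, locally on $X$ the ideals of $Y$ and $Z$ are generated by regular sequences, so $\oo_Y,\oo_Z$ carry Koszul resolutions by free $\oo_X$-modules. Using flatness of $\aa$ over $\kep$ and of $\bb,\cc$, I would first lift the Koszul resolution of $\oo_Y$ to a complex $P_\idot$ of free \emph{right} $\aa$-modules resolving $\bb$ and reducing mod $\eps$ to the chosen resolution of $\oo_Y$. Then $T_\idot:=P_\idot\otimes_\aa\cc$ is a complex of flat $\kep$-modules representing $\bb\otimes^{\mathbb L}_\aa\cc$, and its reduction $T_\idot/\eps T_\idot$ is a complex of $\oo_X$-modules computing $\ttor^{\oo_X}_\idot(\oo_Y,\oo_Z)$. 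Here coisotropy of $Y,Z$ is what makes the hypotheses nonvacuous: it is exactly the condition allowing $\oo_Y,\oo_Z$ to deform to one-sided $\aa$-modules.

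Because each $T_k$ is $\eps$-torsion-free, multiplication by $\eps$ gives a short exact sequence of complexes $0\to T_\idot\xrightarrow{\,\eps\,}T_\idot\to T_\idot/\eps T_\idot\to 0$, and I would define $\delta$ to be the associated connecting operator on homology: lift a cycle to a chain $z\in T_k$, so that $dz\in\eps T_{k-1}$, write $dz=\eps w$, and set $\delta$ of the class to be the reduction of $[w]$; this is well defined and lowers homological degree by one. The product on $\ttor$ is the graded-commutative one induced by the commutative Koszul DG-algebra structure on the reduction $P_\idot/\eps P_\idot$, while $\delta$ comes entirely from the deformation. The crux of \textsf{(i)} is to verify that $\delta$ is a differential operator of order $\le 2$ for this product: I would compute the chain-level triple defect $\delta(xyz)-(\text{order-two Koszul combination in }x,y,z)$ and show it is a boundary. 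The only obstruction to $\delta$ being a derivation is the first-order noncommutativity $aa'-a'a\equiv\eps\{a,a'\}$ of $\aa$, and since $\{-,-\}$ is a \emph{biderivation} (contraction with $P$) the triple defect cancels; this simultaneously identifies the principal symbol of $\delta$ with the image of $P$ on $Y\cap Z$. I expect this chain-level order-$\le2$ verification, with its signs and its gluing, to be the main technical obstacle.

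For $\delta^2=0$ I would use the standard fact that the square of the first Bockstein for $\kep=\mathbb{C}[\eps]/(\eps^2)$ is the secondary Bockstein obstructing a lift of the data to $\mathbb{C}[\eps]/(\eps^3)$; by hypothesis such second-order lifts exist locally, so $\delta^2$ vanishes locally, and by naturality of the construction it glues to $\delta^2=0$ globally. Part \textsf{(ii)} then follows from the symbol computation: the Gerstenhaber bracket \eqref{bracket} is, up to sign, the polarization of the principal symbol of $\delta$. Two admissible choices of $\bb,\cc$ produce operators differing by a \emph{first-order} operator, since the deformations of $\oo_Y$ (resp.\ $\oo_Z$) to one-sided $\aa$-modules form a torsor whose difference class contributes to $\delta$ only a derivation; hence both operators share the order-two symbol $P$ and induce the same bracket.

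Part \textsf{(iii)} is parallel, with $\otimes_\aa$ replaced by $\hhom_\aa$. Applying $\hhom_\aa(-,\cc')$ to $P_\idot$ yields a complex of flat $\kep$-modules representing $R\hhom_\aa(\bb,\cc')$ whose reduction mod $\eps$ computes $\eext^\hdot_{\oo_X}(\oo_Y,\oo_Z)$; the same Bockstein recipe defines $\delta'$, and the secondary-obstruction argument gives $(\delta')^2=0$. The $\ttor$-module structure on $\eext$ is realized at the chain level by combining the Koszul (co)multiplication on $P_\idot$ with the evaluation pairing $\hhom_\aa(P_\idot,\cc')\otimes P_\idot\to\cc'$, and the hypothesis $\cc'=\cc^t$ is precisely what makes the right-module deformation used for $\eext$ compatible with the left-module deformation $\cc$ used for $\ttor$, so that the two Bocksteins $\delta,\delta'$ are intertwined by this pairing. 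The displayed BV-module identity is then the homological shadow of the resulting chain-level Leibniz relation between $\delta$ and $\delta'$, and independence of the pairing from $\bb,\cc$ follows exactly as in \textsf{(ii)}, since changing these deformations alters $\delta$ and $\delta'$ only by first-order operators.
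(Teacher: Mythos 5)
Your definition of $\delta$ and $\delta'$ as Bockstein (connecting) operators for the $\eps$-filtration, and your derivation of $\delta^2=0$ from the local existence of second-order lifts, do coincide with the paper's construction (Section \ref{bvdif} and Lemma \ref{operations}(3)), and your independence arguments have the right flavor. The genuine gap is at the heart of part (i): the verification that $\delta$ has order $\le 2$ with respect to the product on $\ttor$. You propose to compute a chain-level ``triple defect'' and claim it cancels because ``the only obstruction to $\delta$ being a derivation is the first-order noncommutativity of $\aa$'' and $P$ is a biderivation. That is not the actual obstruction. In the paper's chain-level computation (Lemma \ref{poisson}) the terms of the defect $\Xi(x,y,z)$ coming from the algebra cochain $\alpha_A$ cancel automatically, by inclusion--exclusion, with no use of any biderivation property; what survives are the terms coming from the module cochains $\alpha_B,\alpha_C$, and their vanishing is equivalent to the identity \eqref{2nd}, i.e. to $\alpha_B$ (resp. $\alpha_C$) being of order $\le 1$ in the module variable. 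An arbitrary flat deformation does not satisfy this; one must first replace the given cocycles by equivalent ones that do. That replacement is exactly Proposition \ref{exist}(ii) --- a nontrivial construction using coisotropy, a splitting of the conormal sequence, and the surjectivity of the principal symbol map --- combined with Lemma \ref{operations}(2), which guarantees that changing cocycles by coboundaries does not change $\delta$ on homology. Your proposal contains no counterpart of this step, and without it the BV property is unproven.

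There is also a structural problem with your model: over the noncommutative sheaf $\aa$ the lifted Koszul resolution $P_\idot$ carries no natural multiplication, so the product on $\ttor$ and your operator $\delta$ live on different complexes and the ``chain-level triple defect'' is not a well-defined expression in your setting. The paper avoids this by working with the bar complex $T_\idot=B\otimes A^{\otimes\idot}\otimes C$, on which both the shuffle product (Lemma \ref{lod}) and the operator $\delta_\alpha$ are explicit, and by identifying $\delta_\alpha$ with the connecting map of the long exact sequence. Finally, in parts (ii)--(iii) your key assertion that two admissible choices of $\bb,\cc$ change $\delta$ ``only by a derivation'' is precisely the point that needs proof: the paper deduces it from the injectivity of $H^\hdot_{A,B}\to H^\hdot(A,A)$ (Proposition \ref{exist}), which forces the difference of two module cocycles, after a coboundary adjustment, to be a Hochschild $1$-cocycle of the form $b\otimes a\mapsto b\,\xi(a)$ with $\xi$ a derivation, whence equal brackets. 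You assert the analogous torsor statement without justification.
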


Since flat deformations exist locally,
Theorem \ref{BV} yields the following corollary, which is  the
second
important result of the paper.

\begin{corr}\label{gerst_cor}
Let  $Y,Z\sset X$ any pair of smooth
coisotropic submanifolds of an arbitrary smooth Poisson
algebraic variety
$X$.  Then, on
 ${\scr{T}\!}or^{\mathcal{O}_X}_\idot (\mathcal{O}_Y,
\mathcal{O}_Z)$ there is a {\em canonical}
structure of  Gerstenhaber algebra.

Furthermore, the group
 $\scr{E}^{\!}xt^\hdot_{\mathcal{O}_X} (\mathcal{O}_Y, \oo_Z)$ 
has  a {\em canonical}
structure of  Gerstenhaber module
over the Gerstenhaber algebra ${\scr{T}\!}or^{\mathcal{O}_X}_\idot (\mathcal{O}_Y,
\mathcal{O}_Z)$.
\end{corr}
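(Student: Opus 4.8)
The plan is to deduce the corollary from Theorem \ref{BV} by a gluing argument, using that the sheaves ${\scr{T}\!}or^{\oo_X}_\idot(\oo_Y,\oo_Z)$ and $\scr{E}^{\!}xt^\hdot_{\oo_X}(\oo_Y,\oo_Z)$, together with their multiplication and module structure, are defined globally and canonically (no deformation is involved in their construction), whereas the extra BV data are only guaranteed to exist locally. First I would fix once and for all the \emph{global} deformation $\aa=\kep\o\oo_X$, $f*g=fg+\tfrac{\eps}{2}\{f,g\}$; this is defined on all of $X$, induces the given Poisson bracket, and on any smooth affine chart it admits a second order extension. Keeping this single $\aa$ fixed is exactly what makes the independence statement of Theorem \ref{BV}(ii), which varies only $\bb$ and $\cc$, applicable for gluing.

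Next I would choose an affine open cover $\{U_i\}$ of $X$ on which $\oo_Y$ deforms to a right $\aa|_{U_i}$-module $\bb_i$ and $\oo_Z$ deforms to a left $\aa|_{U_i}$-module $\cc_i$, together with the transpose $\cc_i^t$ needed for the Ext statement. The existence of such one-sided module deformations is precisely what coisotropy of $Y$ and $Z$ buys us, and on a smooth affine chart these may be chosen so as to admit extensions to second order; this is the point that licenses applying Theorem \ref{BV} chart by chart and should be checked carefully (it is the ``flat deformations exist locally'' remark preceding the corollary).

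On each $U_i$, Theorem \ref{BV}(i) then produces a second order operator $\de_i$ with $\de_i^2=0$, hence via \eqref{bracket} a Gerstenhaber bracket $[-,-]_i$ on ${\scr{T}\!}or^{\oo_X}_\idot(\oo_Y,\oo_Z)|_{U_i}$; likewise Theorem \ref{BV}(iii) with $\cc_i'=\cc_i^t$ yields a local Gerstenhaber module pairing $\bra_i$ on $\scr{E}^{\!}xt^\hdot_{\oo_X}(\oo_Y,\oo_Z)|_{U_i}$. The crucial step is gluing. On an overlap $U_i\cap U_j$ the restrictions $\bb_i|_{U_i\cap U_j},\cc_i|_{U_i\cap U_j}$ and $\bb_j|_{U_i\cap U_j},\cc_j|_{U_i\cap U_j}$ are two choices of deformation data over the same open set (with the same global $\aa$), so the independence in Theorem \ref{BV}(ii) gives $[-,-]_i=[-,-]_j$ there, and the independence clause in Theorem \ref{BV}(iii) gives $\bra_i=\bra_j$. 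Hence the local brackets and pairings agree on all overlaps and patch to global operations on the two sheaves. Since graded antisymmetry, the Leibniz rule and the Jacobi identity are local conditions that hold on every $U_i$, they hold for the glued operations; canonicity is immediate, as no data beyond $X$, $P$, $Y$, $Z$ entered.

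I expect the main obstacle to be conceptual rather than computational: one must resist gluing the BV operators $\de_i$ themselves. These genuinely depend on the chosen local deformations and need not agree on overlaps, so they do \emph{not} patch to a global BV operator; only the induced brackets and pairings are canonical. This is exactly why the corollary asserts a Gerstenhaber---rather than a BV---structure globally. The one quantitative input to verify, beyond invoking Theorem \ref{BV}, is the local existence and second order extendability of the coisotropic module deformations $\bb_i,\cc_i$.
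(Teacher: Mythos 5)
Your proposal is correct and is essentially the paper's own route: the corollary is deduced from Theorem \ref{BV} exactly as you describe --- fix the single global split deformation $\aa$ with $f*g=fg+\tfrac{\eps}{2}\{f,g\}$, choose an affine cover carrying module deformations $\bb_i,\cc_i$ (and $\cc_i^t$) that extend to second order, apply parts (i) and (iii) locally, and glue the induced brackets and pairings via the independence statements (ii)--(iii); this is precisely the argument the paper carries out in detail in the proof of Theorem \ref{7}, including your key observation that the operators $\de_i$ themselves depend on the choices and do \emph{not} patch, so that only the Gerstenhaber (not the BV) structure is global. The one point you flag as needing care is settled by Proposition \ref{exist}(iii): one must refine the cover so that the canonical classes of $Y\cap U_i$ and $Z\cap U_i$ are trivial, which is exactly how the paper arranges the second-order extendability of the local module deformations.
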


\noindent
Several examples of such BV and Gerstenhaber structures
are discussed in \S\ref{Exam} below.
\medskip

Our results above were, to a great extent,  inspired
by the work   of K. Behrend and B. Fantechi \cite{BF}.
Behrend and  Fantechi consider a pair
 $Y,Z,$ of  Lagrangian submanifolds
in a {\em holomorphic}
 symplectic  manifold $X$.
They show that one can equip the graded
algebra ${\scr{T}\!}or^{\oo_X}_\idot(\oo_Y,\oo_Z)$ with
a Gerstenhaber bracket, resp.
the graded sheaf $\scr{E}^{\!}xt_{\oo_X}^\hdot(\oo_Y,\oo_Z)$
with a BV type differential.
The approach in \cite{BF} is based on reducing the case of
 a general Lagrangian intersection
to the special case where $X=T^*Y$
and $Z\sset T^*Y$ is the graph of a 
holomorphic function on $Y$
 (and $Y$ is identified with the zero section of $T^*Y$).

Thus, the arguments in \cite{BF}
rely in a crucial way on a version of
Darboux theorem saying that
any holomorphic symplectic manifold is locally isomorphic
to a cotangent bundle. 
Such a result holds for holomorphic symplectic manifolds
(equipped with the
usual Hausdorff topology)
but it is totally false in the algebraic setting.
Indeed, an algebraic symplectic 2-form
need not be locally exact, even in
\`etale topology.
The corresponding argument,
kindly communicated to us by A. Beilinson,
will be given in section \ref{sasha}.

\subsection{Construction of the BV differential}\label{bvdif}
Let $\aa$ be any   flat $\kep$-deformation of the sheaf
$\mathcal{O}_X$ to a sheaf of associative $\kep$-algebras
equipped with an algebra isomorphism
$\mathcal{A}/\ep\mathcal{A} \simeq \oo_X$.
Similarly, let 
 $\mathcal{B}$ be a flat deformation of  $\mathcal{O}_Y$
to a right $\mathcal{A}$-module and $\oo_Z$ has a flat
deformation $\mathcal{C}$ to a left $\mathcal{A}$-module. 
The flatness assumptions imply that multiplication 
by $\eps$ induces an isomorphism $\oo_Y=\cc/\eps\cc\iso \eps\cc,$
and similar isomorphisms $\oo_X\iso\eps\aa$.

The short exact sequence $0 \to \ep \mathcal{C} \to \mathcal{C} \to 
\mathcal{C}/\ep \mathcal{C} \to 0$ induces a long exact sequence
$$
\ldots \to 
{\scr{T}\!}or_{i+1}^{\mathcal{A}}(\mathcal{B}, \mathcal{C}/\ep \mathcal{C})
\to {\scr{T}\!}or_{i}^{\mathcal{A}}(\mathcal{B}, \ep \mathcal{C}) \to
{\scr{T}\!}or_{i}^{\mathcal{A}}(\mathcal{B}, \mathcal{C}) \to
{\scr{T}\!}or_{i}^{\mathcal{A}}(\mathcal{B}, \mathcal{C}/\ep \mathcal{C})
\to \ldots.
$$

Locally, we can choose a projective resolution $P^\hdot$ of $\mathcal{B}$
with $\mathcal{A}$-modules, such that $P^\hdot/\ep P^\hdot$ is a
resolution of $\mathcal{O}_Y$ with projective $\mathcal{O}_X$-modules. 
Further, we have an isomorphism of functors
 $(\cdot) \otimes_{\mathcal{A}} \ep \mathcal{C} 
\simeq (\cdot) \otimes_{\mathcal{A}} \mathcal{O}_X
\otimes_{\mathcal{O}_X} \oo_Z$
and similarly for $\mathcal{C}/\ep \mathcal{C}$.
We deduce canonical isomorphisms
$$
{\scr{T}\!}or^{\mathcal{A}}_i (\mathcal{B}, \ep\mathcal{C})
\simeq {\scr{T}\!}or^{\mathcal{O}_X}_i (\mathcal{O}_Y, \mathcal{O}_Z) \simeq
{\scr{T}\!}or^{\mathcal{A}}_i (\mathcal{B}, \mathcal{C}/\ep\mathcal{C})
$$

Using these isomorphisms, the connecting morphism
in the long exact sequence above yields a map
$$
\delta: {\scr{T}\!}or^{\mathcal{O}_X}_{i+1} (\mathcal{O}_Y, \mathcal{O}_Z)
\to 
{\scr{T}\!}or^{\mathcal{O}_X}_i (\mathcal{O}_Y, \mathcal{O}_Z).
$$

Similarly, suppose that we have 
a deformation $\mathcal{C}'$ of $\mathcal{O}_Z$ to a \textit{right}
$\mathcal{A}$-module. Then there is a long exact sequence
$$
\ldots \to 
\scr{E}^{\!}xt^{i-1}_{\mathcal{A}}(\mathcal{B}, \mathcal{C}'/\ep \mathcal{C}')
\to \scr{E}^{\!}xt^{i}_{\mathcal{A}}(\mathcal{B}, \ep \mathcal{C}') \to
\scr{E}^{\!}xt^{i}_{\mathcal{A}}(\mathcal{B}, \mathcal{C}') \to
\scr{E}^{\!}xt^{i}_{\mathcal{A}}(\mathcal{B}, \mathcal{C}'/\ep \mathcal{C}')
\to \ldots
$$
In particular,  one has a morphism
$$
\delta':\
 \scr{E}^{\!}xt_{\mathcal{A}}^i(\mathcal{B}, \ep \mathcal{C}')
\simeq \scr{E}^{\!}xt_{\mathcal{O}_X}^i (\mathcal{O}_Y, \mathcal{O}_Z)
\too \scr{E}^{\!}xt_{\mathcal{O}_X}^{i+1} (\mathcal{O}_Y, \mathcal{O}_Z)
\simeq \scr{E}^{\!}xt_{\mathcal{A}}^{i+1}(\mathcal{B}, \mathcal{C}'/\ep \mathcal{C}')
$$
When both $\mathcal{C}$ and $\mathcal{C}'$ are given 
we will assume that $\mathcal{C}'=\cc^{t}$ or $\cc'$ is 
\textit{transposed to} $\mathcal{C}$. 
See Section 3.2 regarding the canonical product on $\Tor$ and 
its action on $\Ext$. 

\subsection*{Notation.}

Given a vector bundle (a locally free sheaf) $E$, we write $E^\vee$
for the dual vector bundle. Let $\Omega_X,$ resp. 
$T_X=\Omega_X^\vee$, denote the cotangent, resp. tangent,
sheaf on a manifold $X$. Let $N_{X/Y}$ denote
the normal sheaf for a submanifold $Y\sset X$.

We often abuse the notation and write
$\Tor_\idot^{X}(-,-)$ for $\Tor_\idot^{\oo_X}(-,-),$
and semilarly for Ext's.

\subsection{A conjecture by physicists}
Recall first that, for any (triangulated) category
$\scr C$, one can define its 
Hochschild cohomology
groups $H\!H^\hdot(\scr C)$.

According to  A. Kapustin and L. Rozansky 
one has the following

\begin{conj}{\em To each pair $Y,Z\sset X,$ of
 smooth Lagrangian submanifolds
of  a smooth  algebraic symplectic variety $X$, one can associate
a triangulated category $\cat_X(Y,Z)$, cf. \cite{KR}, such
that 
the Hochschild cohomology
of the  category $\cat_X(Y,Z)$ is given by
$$H\!H^\hd(\cat_X(Y,Z))\cong \ttor_\idot^{\oo_X}(\oo_Y,\oo_Z).
$$

Moreover,
the  standard Gerstenhaber bracket on Hochschild cohomology
goes, under the isomorphism above, to the
canonical Gerstenhaber bracket 
on $\ttor_\idot^{\oo_X}(\oo_Y,\oo_Z)$ 
provided by Corollary \ref{gerst_cor}.}
\end{conj}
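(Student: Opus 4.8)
Since the final statement is a conjecture, what follows is a strategy rather than a proof; the plan is to give the putative category $\cat_X(Y,Z)$ a precise algebro-geometric meaning through deformation quantization, to compute its Hochschild cohomology by a localization-plus-duality argument, and to match the two brackets by tracing both back to the single quantization parameter. I would organize the attempt in three steps: (1) construct the category, (2) identify $H\!H^\hd(\cat_X(Y,Z))$ with the Tor-sheaf, (3) compare the Gerstenhaber brackets.

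For Step (1) I would extend the first-order datum $\aa$ of Theorem \ref{BV} to a full formal deformation quantization $\aa_\hbar$ of $\oo_X$ over $\C[[\hbar]]$ whose truncation $\aa_\hbar/\hbar^2$ recovers $\aa$ (with $\eps\mapsto\hbar$), and extend $\bb,\cc$ to holonomic $\aa_\hbar$-modules quantizing the Lagrangians $\oo_Y,\oo_Z$; in the language of Kashiwara and Schapira these are Lagrangian DQ-modules supported on $Y$ and $Z$. I would then \emph{define} $\cat_X(Y,Z)$ to be the full triangulated subcategory of the derived category of holonomic $\aa_\hbar$-modules generated by $\bb$ and $\cc$. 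This is the algebraic shadow of the Kapustin--Rozansky category of $A$-branes of \cite{KR}: the space of open strings from $\bb$ to $\cc$ is the quantized Floer complex $\mathbb{R}\Hom_{\aa_\hbar}(\bb,\cc)$.

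For Step (2), by definition $H\!H^\hd(\cat_X(Y,Z))=\mathbb{R}\Hom(\Id,\Id)$ in the functor category, which for a category of DQ-modules is computed by the diagonal bimodule as $\mathbb{R}\Hom_{\aa_\hbar\o\aa_\hbar^{\mathrm{op}}}(\aa_\hbar,\aa_\hbar)$, and which the two generators force to localize onto their joint support $Y\cap Z$. I would then specialize $\hbar\to 0$ via the $\hbar$-adic spectral sequence. Because $X$ is symplectic its canonical bundle is canonically trivial, and the Lagrangian condition gives $N_{Y/X}\cong\Omega_Y$; together these yield the self-duality $\eext_{\oo_X}^\idot(\oo_Y,\oo_Z)\cong\ttor_\idot^{\oo_X}(\oo_Y,\oo_Z)$ that interchanges cohomological and homological gradings (this is implicit in \cite{BF}). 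Combining this duality with a boundary-bulk (Cardy-type) identification of $H\!H^\hd$ with the diagonal open-string states, and with degeneration of the spectral sequence, should produce the desired isomorphism $H\!H^\hd(\cat_X(Y,Z))\cong\ttor_\idot^{\oo_X}(\oo_Y,\oo_Z)$ with Hochschild degree matching Tor degree.

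For Step (3), the standard Gerstenhaber bracket on $H\!H^\hd$ governs deformations of the $A_\infty$-structure of $\cat_X(Y,Z)$, i.e. it is the bracket on deformations of $\aa_\hbar$ in the quantization direction, while the bracket of Corollary \ref{gerst_cor} is the Bockstein bracket attached to the truncation $\aa=\aa_\hbar/\hbar^2$; since both originate from the same parameter $\hbar$, I would prove their equality by unwinding the connecting-homomorphism definition of $\de$ from Section \ref{bvdif} and identifying it, under the Koszul-dual description of $H\!H^\hd$, with the Hochschild differential. The hard part---and the reason the statement remains a conjecture---is already Step (1): as emphasized in Section \ref{sasha}, an algebraic symplectic form need not be locally exact even in the \'etale topology, so a global quantization $\aa_\hbar$ may simply fail to exist. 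One is then forced to replace the sheaf of algebras by a gerbe of DQ-algebroids, and must show both that $\cat_X(Y,Z)$ is well-defined independently of these local choices and that its Hochschild cohomology is insensitive to the gerbe twist. Making this globalization rigorous---thereby endowing the Kapustin--Rozansky category with an honest algebro-geometric definition---is the principal obstacle I expect to encounter.
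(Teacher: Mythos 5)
The first thing to say is that the paper contains no proof of this statement: it is an open conjecture attributed to Kapustin and Rozansky, and the only supporting material the paper offers is a pair of consistency checks --- for $X=T^\vee Y$ with $Z=Y$ the zero section, where $\cat_X(Y,Z)$ should be $D^b(\coh Y)$ and $HH^\hd(D^b(\coh Y))=H^\hd(Y,\La^\hd T_Y)=\Tor_\idot^{T^\vee Y}(\oo_Y,\oo_Y)$ with the Schouten bracket, and for $Z=\text{Graph}(df)$, where $\cat_X(Y,Z)$ should be the category of matrix factorizations of $f$. So your proposal cannot be compared against a proof in the paper; it can only be judged as a research program, and you are right to present it as such. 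You also correctly identify the principal structural obstacle, in agreement with \S\ref{sasha}: an algebraic symplectic form need not be locally exact even \'etale-locally, so a global sheaf-level quantization $\aa_\hbar$ may not exist and one is pushed toward DQ-algebroids; this is precisely the phenomenon that forces the paper to avoid Darboux-style arguments.

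As a program, however, your sketch has gaps beyond globalization that you should name honestly. First, even locally, extending the first-order data to all orders in $\hbar$ --- in particular quantizing $\oo_Y$ and $\oo_Z$ to holonomic $\aa_\hbar$-modules --- is obstructed; the paper deliberately works only with first-order deformations admitting local second-order extensions, and Proposition \ref{exist}(iii) already needs triviality of the canonical class for the second-order step (cf.\ also the remark citing \cite{BGP} for the non-split obstruction in $H^1(Y,N_{X/Y})$). Your Step (1) silently assumes these obstructions vanish. Second, Step (2) conflates two different Hochschild objects: the Hochschild cohomology of the full subcategory generated by $\bb$ and $\cc$ is that of the dg algebra $\mathbb{R}\Hom_{\aa_\hbar}(\bb\oplus\cc,\bb\oplus\cc)$, not the self-Ext of the diagonal bimodule ``localized onto $Y\cap Z$''; the Cardy-type boundary-bulk identification you invoke to bridge them is essentially the first assertion of the conjecture itself, not an available tool, and the degeneration of the $\hbar$-adic spectral sequence together with the grading exchange via the duality $\eext_{\oo_X}^\idot(\oo_Y,\oo_Z)\cong\ttor_\idot^{\oo_X}(\oo_Y,\oo_Z)$ are each unproven steps. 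Third, in Step (3), ``both brackets originate from the same parameter $\hbar$'' is not an argument: the paper's bracket is defined via the connecting homomorphism of the $\eps$-filtration and its well-definedness occupies Lemma \ref{operations} and Theorem \ref{7}, while the Gerstenhaber bracket on $HH^\hd$ of a category is of a quite different nature; identifying them is the entire content of the second assertion. A minimal concrete test of your construction would be to verify that it reproduces the paper's two known cases, the zero section and the graph of $df$; until then the proposal is a plausible outline of where a proof might come from, not a proof.
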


If $X=T^\vee Y$ is the cotangent bundle of $Y$ 
and $Z=Y$ is the zero section, then one should have
$\cat_X(Y,Z)=D^b(\coh Y)$.
In this case, we have
$$HH^\hd(D^b(\coh Y))=H^\hd(Y, \La^\hd T_Y)=
\Tor_\idot^{T^\vee Y}(\oo_Y,\oo_Y).
$$
so that the Gerstenhaber bracket is induced by  the Schouten bracket
on $\La^\hd T_Y$.

More generally, let  $X=T^\vee Y$ and $Y$ be the zero section as above,
and let $Z=\text{Graph}(df)$ where $f\in \k[Y]$.
Then $\cat_X(Y,Z)$ should be the
category of matrix factorizations 
$(F \overset{_d}{\underset{^{d'}}{\leftrightarrows}} F',$
$d\ccirc d'=f\cdot\Id=d'\ccirc d),$ associated with the function $f$,
cf. \cite{O}.

\begin{rem}
Observe that the sheaves ${\scr{T}\!}or_\idot^{\mathcal{O}_X}
(\mathcal{O}_Y, \mathcal{O}_Z)$ are related to the  hyper-Tor groups
$\Tor_\idot^{X}(\mathcal{O}_Y, \mathcal{O}_Z)$ via the local-to-global 
spectral sequence 
$$
H^\idot(X,\, {\scr{T}\!}or_\idot^{\mathcal{O}_X}
(\mathcal{O}_Y, \mathcal{O}_Z))
\enspace\Longrightarrow \enspace
\Tor_\idot^{X}(\mathcal{O}_Y, \mathcal{O}_Z).$$
At the same time, the global hyper-Tor may also be calculated by
applying $R\Gamma$ to the sheaf of DG algebras $T_\idot$ described
in Section 3 below. Thus, we expect that there exists a refined version
of our results, in which Gerstenhaber or Batalin-Vilkovisky structures
on the cohomology sheaves of $T_\idot$, are replaced by their ``strong
homotopy" versions on $T_\idot$ itself. 
In fact, the lemmas of Section 3.2 point towards such a refinement.
Similar remarks apply to 
the Ext groups (local and global), and
 the polydifferential version of the resolution $E^\hdot$ 
in Section 3. 
\end{rem}

\section{Existence of first and second order deformations.}

\subsection{Algebraic setup}\label{setup}
Following Gerstenhaber, a $\kep$-flat deformation
of an associative
 $\C$-algebra $A$ is given by a $\kep$-bilinear associative
 product structure on the vector space
 $A_\ep = A\oplus \ep A$  defined by 
$$
(a_1 \oplus 0) * 
(a_2  \oplus 0) = a_1 a_2 \oplus \ep\cdot \alpha_A(a_1, a_2), \qquad a_1, a_2 \in A 
\subset A_\ep
$$ 
with $\alpha_A: A\otimes A\to A$ a $\C$-linear map.
The  associativity of the $*$-product is equivalent to the 
equation
\begin{equation} 
\label{hoch-A}
\alpha_A (a_1 a_2, a_3) - \alpha_A (a_1, a_2 a_3) + \alpha_A (a_1, a_2) a_3 - 
a_1 \alpha_A (a_2, a_3) = 0.
\end{equation} 

Fix a $\kep$-flat deformation of $A$ as above.
Given  a right $A$-module $B$, one may consider
$\kep$-flat extensions of the $A$-module structure on $B$
to a  right  $A_\ep$-module on  $B_\ep = B\oplus\ep B$.
Explicitly, such an $A_\ep$-module structure  on  $B_\ep$
is determined by
a bilinear map $\alpha_B: B\otimes A \to B$.
The corresponding right  $A_\ep$-action  is given by
the formula
$$
(b \oplus 0)* 
(a \oplus 0) = ba \oplus \ep\cdot \alpha_B(b, a), \quad a\in A, b\in B.  
$$
The map $\alpha_B: B\otimes A \to B$ must satisfy the 
associativity equation
\begin{equation}
\label{hoch-B}
\alpha_B (b a_2, a_3) - \alpha_B (b, a_2 a_3) + \alpha_B (b, a_2) a_3 - 
b \alpha_A (a_2, a_3) = 0.
\end{equation}

Further, 
any pair of $\kep$-linear
automorphisms of the form
$$
A_\ep \to A_\ep,\enspace
a \mapsto a \oplus \ep \beta_A(a) \quad
\text{and}\quad B_\ep \to B_\ep,\enspace
b \mapsto b \oplus \ep \beta_B(b),
$$
where  $\beta_A: A \to A$ and $\beta_B: B \to B$ are $\C$-linear
maps, induces equivalent
deformations corresponding to cochains
\begin{equation}
\label{auto-A}
(a_1, a_2) \mapsto \alpha_A(a_1, a_2) + \beta_A(a_1 a_2) - a_1 
\beta_A(a_2) - \beta_A(a_1) a_2
\end{equation}
\begin{equation}
\label{auto-B}
(b, a_2) \mapsto \alpha_B(b, a_2) + \beta_B(b a_2) - b 
\beta_A(a_2) - \beta_B(b) a_2
\end{equation}

A deformation as above
 extends to $\C[\varepsilon]/(\varepsilon^3)$ if and only 
if one has
\begin{equation}
\label{integrab-A}
\alpha_A(\alpha_A(a_1, a_2), a_3) -
\alpha_A(a_1, \alpha_A(a_2, a_3)) = d \gamma_A (a_1, a_2, a_3)
\end{equation}
\begin{equation}
\label{integrab-B}
\alpha_B(\alpha_B(b, a_1), a_2) -
\alpha_B(b, \alpha_A(a_1, a_2)) = d \gamma_B (b, a_1, a_2),
\end{equation}
where $\gamma_A: A \otimes A \to A$, $\gamma_B: B\otimes A \to B$
are some linear maps
and $d\gamma_A$, $d\gamma_B$ are defined similarly to the LHS of 
\eqref{hoch-A} and \eqref{hoch-B}, respectively. 

\subsection{Deformation complex}\label{def_comp_sec}

The identities of the previous subsection can be reformulated as follows.
The $A$-module structure on $B$ defines a homomorphism $g: A\to \End_\C B$
of algebras over $\C$, and deforming the algebra/module structure 
amounts to deforming $g$ to an algebra homomorphism $A_\ep 
\to \End_{\kep}(B_\ep)$. Observe that $\End_{\kep} (B_\ep)$ is the 
trivial deformation of $\End_\C B$. Thus, adjusting the definitions of
\cite{MR704600}, \cite{FMY} (i.e. 
removing the term responsible for the deformation of the algebra
$\End_\C B$) we introduce a deformation complex $C^\hdot_{A,B}$
with terms
$$
C^n_{A,B} = C^n(A, A) \oplus C^{n-1} (A, \End_\C B) = \Hom_\C(A^{\otimes n}, A)
\oplus \Hom_\C(A^{\otimes (n-1)}, \End_\C B),
$$
where $C^n(A, X)$ denotes the standard complex of Hochschild cochains of 
an $A$-bimodule $X$. The differential in the complex $C^\hdot_{A,B}$ is given by
\beq{d_g}
d_{A,B}(\alpha_A \oplus \alpha_B) = d_{_{\operatorname{Hoch}}} \alpha_A \oplus (g \alpha_A 
-  d_{_{\operatorname{Hoch}}} \alpha_B),
\eeq
where $d_{_{\operatorname{Hoch}}}$ stands for
 the standard Hochschild differential, cf. \textit{loc. cit.}
We put $H^n_{A,B} := H^n(C^\hdot_{A,B})$. 

Equations
\eqref{hoch-A}, \eqref{hoch-B} say that 
$\alpha = \alpha_A \oplus \alpha_B$ is a cocycle in $C^2_{A,B}$.
Equations
\eqref{auto-A} and \eqref{auto-B} say that the equivalence class 
of the deformation depends only on the image of $\alpha$ in $H^2_{A,B}$. 

To reinterpret integrability conditions recall that by 
\textit{loc. cit.} $C^{\hdot - 1}_{A,B}$ has a structure
of  DG Lie algebra such that the term $C^{\hdot -1}(A, A)$ 
with its Gerstenhaber bracket, is a quotient DG Lie algebra
of $C^{\hdot-1}_{A,B}$. Explicitly, 
up to a choice of signs, for $\alpha_A \oplus \alpha_B \in C^n_{A,B}$, 
$\alpha'_A \oplus \alpha'_B \in C^m_{A,B}$ one has
\begin{multline*}
[\alpha_A \oplus \alpha_B, \alpha'_A \oplus \alpha'_B] = 
\big(\alpha_A \circ \alpha_A' - (-1)^{(n-1)(m-1)} \alpha_A' \circ \alpha_A\big)
\oplus 
\\ \oplus \big(\alpha_B \circ \alpha_A' + \alpha_B \cup \alpha_B' - (-1)^{(n-1)(m-1)} (\alpha_B' \circ \alpha_A + \alpha_B' \cup \alpha_B)\big) 
\end{multline*}
where
$$
\alpha_B \circ \alpha_A' := \sum_{s=1}^n (-1)^{(s-1)(m-1)} 
\alpha_B(1_A^{\otimes (s-1)} 
\otimes \alpha_A' \otimes 1^{\otimes (n-s)}_A)
$$
and similarly for the other terms. The cup product 
$\alpha_B\cup \alpha_A': B \otimes A^{\otimes^{m+n-1}} \to B$
is the composition of $\alpha_B \otimes \alpha_A': B \otimes A^{\otimes^{m+n-1}}
\to B \otimes A$ and the action map $B\otimes A \to B$.  

\bigskip
\noindent
Then, equations \eqref{integrab-A}, \eqref{integrab-B}
say that $\frac{1}{2}[\alpha, \alpha] = d_{A,B} (\gamma)$, i.e. that 
$[\alpha, \alpha]$ represents the zero class in $H^3_{A,B}$. 

\bigskip
\noindent
Observe that $C^{\hdot -1}(A, \End_\C B)$  is 
a subcomplex of $C^\hdot_{A,B}$,
and 
$C^\hdot(A, A)$ is a quotient complex of $C^\hdot_{A,B}.$
 The corresponding
long exact sequence of cohomology reads 
\beq{long1}\ldots\to \Ext^{n - 1}_A (B, B)
\to H^n_{A,B}\to H^n(A,A)\stackrel{g}\to\Ext^n_A (B, B)\to\ldots
\eeq

We see that  an $n$-cocycle $\alpha_A\in C^2(A,A)$ may be
lifted to a class in $H^2_{A,B}$ if and only if
the map $g\ccirc \alpha_A: A\times A\to \End_\C B$,
that represents the image of the
class of $\alpha_A$ under the connecting homomorphism,
gives the zero class in $\Ext^2_A(B,B)$.

Similarly, given another algebra homomorphism $h: A\to ( \End_\C C)^{op}$,
one can introduce a bigger deformation complex with terms
\beq{c_gh}
C_{A,B,C}^n = 
 C^n(A, A) \oplus C^{n-1} (A, \End_\C B) \oplus
C^{n-1} (A, \End_\C C).
\eeq
The differential in the complex $C^\hdot_{A,B,C}$ is given by
$$
d_{A,B,C}(\alpha_A \oplus \alpha_B
\oplus \alpha_C) = d_{_{\operatorname{Hoch}}} \alpha_A \oplus (g \alpha_A 
-  d_{_{\operatorname{Hoch}}} \alpha_B) \oplus (h\alpha_A  - d_{_{\operatorname{Hoch}}} \alpha_C).
$$
For the corresponding cohomology groups $H^\hd_{A,B,C}$
there is a long exact sequence
\begin{multline*}
\ldots\to \Ext^{n - 1}_A (B, B)\oplus
\Ext^{n - 1}_A (C,C)
\too H^n_{A,B,C}\too\\
\too  H^n(A,A)\stackrel{g\oplus h}\too\Ext^n_A (B, B)\oplus
\Ext^{n}_A (C,C)\to\ldots
\end{multline*}

\subsection{Local deformations}

Let now $X$ be a smooth affine  variety
and $Y\sset X$ a smooth closed subvariety. 
Write $A:=\C[X]$, resp. $B:=\C[Y]$, for the
corresponding coordinate rings.

A bivector $P\in H^0(X, \La^2 T_X)$
with a vanishing Schouten bracket gives a Poisson structure on $A$.
We will say that $Y$ is \textit{coisotropic} with respect to $P$
if $P$ projects to zero in $H^0(Y, \La^2 N_{X/Y})$.

\begin{prop}
\label{exist} The map $H^\hdot_{A,B}\to H^\bullet(A,A)$,
in \eqref{long1}, is injective. Furthermore,
for any  $2$-cocycle
$\alpha_A \in C^2(A, A)$ 
the following holds:
\vskip 1pt

\vi $Y$ is a {\em coisotropic} subvariety in $X$
if and only if  there exists 
$\alpha_B: B \otimes A\to B$ that gives a first order
deformation of
 $B$, i.e., if and only if 
there exists $\alpha_B$ such that the pair
$(\alpha_A, \alpha_B)$ gives a 2-cocycle in the
complex $C^\hdot_{A,B}$.

\vii Assume that 
 $\alpha_A(a_1, a_2) = \frac{1}{2} \langle P, da_1 \wedge da_2 \rangle$ 
with
 $P\in H^0(X, \La^2 T_X)$. 
Then, in  $\mathsf{(i)}$,  one may choose $\alpha_B: B\otimes A \to B$
to be a sum of
a bidifferential operator of bidegree $(1, 1)$ and a bidifferential
operator of bidegree $(0, 2)$.

\viii Assume, in addition, that the bivector $P$ 
has a vanishing Schouten bracket with itself: $\{P, P\} = 0$.
Then, there exists a {\em symmetric} bilinear map
 $\gamma_A: A\otimes A \to A$ such that equation
\eqref{integrab-A} holds.
If, moreover, the
canonical class of $Y$ is trivial, then there exist $\alpha_B: B \otimes A
\to B$ and $\gamma_B: B \otimes A \to B$ such that  equations
\eqref{hoch-B} and 
 \eqref{integrab-B} hold, i.e., the map
$(b, a)\mapsto ba + \varepsilon \alpha_B(b, a) + \varepsilon^2 
\gamma_B(b, a)$ gives a second order deformation of $B$.
\end{prop}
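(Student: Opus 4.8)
The plan is to reformulate each assertion inside the deformation complex $C^\hdot_{A,B}$ of Section~\ref{def_comp_sec} and its long exact sequence \eqref{long1}, feeding in two standard computations valid because $X$ and $Y$ are smooth and affine: the Hochschild--Kostant--Rosenberg isomorphism $H^n(A,A)\cong H^0(X,\La^n T_X)$, and the Koszul-resolution identification $\Ext^n_A(B,B)\cong H^0(Y,\La^n N_{X/Y})$. First I would check that, under these identifications, the connecting map $g\colon H^n(A,A)\to\Ext^n_A(B,B)$ appearing in \eqref{long1} is the geometric operation that restricts a polyvector field from $X$ to $Y$ and then projects each leg along the surjection $T_X|_Y\onto N_{X/Y}$; this is functoriality of the module-structure map $g\colon A\to\End_\C B$. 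Granting this, injectivity of $H^\hdot_{A,B}\to H^\hdot(A,A)$ is equivalent, by exactness of \eqref{long1}, to surjectivity of $g$ in every degree. The latter holds because both varieties are affine: restriction $H^0(X,\La^n T_X)\to H^0(Y,\La^n T_X|_Y)$ is onto since $H^1(X,-)$ vanishes, and applying $\La^n$ to $T_X|_Y\onto N_{X/Y}$ keeps the induced map onto global sections over the affine $Y$.

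For \textsf{(i)} I would unwind \eqref{d_g}: a pair $(\alpha_A,\alpha_B)$ is a $2$-cocycle precisely when $\alpha_A$ is a Hochschild cocycle (given) and $g\alpha_A=d_{\operatorname{Hoch}}\alpha_B$, which is exactly \eqref{hoch-B}. Hence such an $\alpha_B$ exists iff the class $[g\alpha_A]=g_*[\alpha_A]$ vanishes in $\Ext^2_A(B,B)$; since this class depends only on the bivector $[\alpha_A]\in H^0(X,\La^2 T_X)$ and equals its image in $H^0(Y,\La^2 N_{X/Y})$ under the map of the previous paragraph, its vanishing is by definition the coisotropy of $Y$. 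For \textsf{(ii)} I would solve \eqref{hoch-B} explicitly and locally: pick coordinates with $Y=\{x_1=\dots=x_k=0\}$, rewrite coisotropy as $\{x_i,x_j\}\in I_Y$ for $i,j\le k$, and then exhibit $\alpha_B$ as the sum of an operator first order in each of $b$ and $a$ and an operator of order zero in $b$ and two in $a$. The coisotropy is exactly what lets the normal--normal part of $P$, which would otherwise obstruct \eqref{hoch-B}, be absorbed into the bidegree-$(0,2)$ summand.

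Part \textsf{(iii)} has two halves. For the symmetric $\gamma_A$: the cochain $\tfrac12[\alpha_A,\alpha_A]$ represents $\tfrac12\{P,P\}=0$ in $H^3(A,A)$ under Hochschild--Kostant--Rosenberg, so it is a coboundary $d_{\operatorname{Hoch}}\gamma_A$ and \eqref{integrab-A} is solvable; the antisymmetric part of any primitive can then be removed by an order-$\ep^2$ gauge transformation of the type \eqref{auto-A}, which leaves $\alpha_A$ unchanged, yielding a \emph{symmetric} $\gamma_A$. With this $\gamma_A$ fixed, solving \eqref{integrab-B} for $\gamma_B$ is, in the language of $C^\hdot_{A,B}$, the equation $d_{\operatorname{Hoch}}\gamma_B=g\gamma_A-(\text{the $B$-component of }\tfrac12[\alpha,\alpha])$; the right-hand side is a Hochschild $2$-cocycle in $C^\hdot(A,\End_\C B)$ and the obstruction to solving is its class in $\Ext^2_A(B,B)\cong H^0(Y,\La^2 N_{X/Y})$. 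Because $H^3_{A,B}\hookrightarrow H^3(A,A)$ and the image of $\tfrac12[\alpha,\alpha]$ there is $\tfrac12\{P,P\}=0$, some primitive $\gamma_A^0$ does extend, so this obstruction equals $g_*[\gamma_A-\gamma_A^0]$ and measures precisely the cost of the symmetric normalization.

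The hard part is showing that this last obstruction dies when the canonical class of $Y$ is trivial. I expect that a direct computation in the adapted coordinates of \textsf{(ii)}, using $\{P,P\}=0$ and coisotropy, writes the obstruction section as a divergence built from the Hamiltonian flow of $P$ transverse to $Y$, so that its vanishing amounts to the existence of a $P$-compatible volume form along $Y$; a trivialization of $K_Y$ supplies exactly such a volume form, hence a divergence operator, hence $\gamma_B$, and with it the second-order deformation $(b,a)\mapsto ba+\ep\alpha_B(b,a)+\ep^2\gamma_B(b,a)$. Pinning down this identification — in particular that the \emph{symmetric} choice of $\gamma_A$ forces the canonical-class representative, and that a volume form on $Y$ indeed trivializes it — is the main obstacle; it is exactly here that triviality of the canonical class of $Y$ (automatic after shrinking, which is why Corollary~\ref{gerst_cor} claims only a local statement) is genuinely used.
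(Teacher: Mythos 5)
Your treatment of the injectivity claim and of part (i) is sound, and in fact more explicit than the paper's own write-up (which essentially only records that the cocycle condition is \eqref{hoch-B}): reducing injectivity to surjectivity of $g$ in every degree via the long exact sequence \eqref{long1}, and proving that surjectivity by HKR plus affineness, is a correct argument. The genuine problems are in parts (ii) and (iii). The most serious is the one you flag yourself: the entire content of part (iii) is to show that triviality of $K_Y$ produces $\gamma_B$, and you leave this as ``I expect that a direct computation\dots''. Moreover, your framework --- fix the symmetric $\gamma_A$, fix some $\alpha_B$, then compute one obstruction class in $\Ext^2_A(B,B)$ --- obscures where the volume form actually enters. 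In the paper's proof the trivialization $\omega$ of $K_Y$ is used to \emph{choose} $\alpha_B$, not to kill an obstruction attached to a previously chosen $\alpha_B$: one defines $\psi: I/I^2 \to B$ as the composite of $P|_Y: I/I^2 \to T_Y$ with the divergence operator $\partial \mapsto Lie_\partial(\omega)\cdot \omega^{-1}$, checks that it solves the first equation in \eqref{rp}, and feeds it into the construction of part (ii). The resulting $\alpha_B$ satisfies the extra identity \eqref{pseudoLie}, which says precisely that the obstruction cochain $\eta$ (the right-hand side of \eqref{for_gamma_B}) is symmetric in its last two arguments when restricted to $B \otimes I \otimes I$; hence $\alt(\eta)=0$, so by the explicit HKR isomorphism $\eta$ is a Hochschild coboundary and $\gamma_B$ exists. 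Without some such mechanism tying the trivialization of $K_Y$ to a good choice of $\alpha_B$, your obstruction $g_*[\gamma_A - \gamma_A^0]$ is a name, not a computation.

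Two further points. First, your symmetrization of $\gamma_A$ by an order-$\ep^2$ gauge transformation cannot work: for commutative $A$, a coboundary $d_{\hoch}\beta_A(a_1,a_2) = a_1\beta_A(a_2) - \beta_A(a_1a_2) + \beta_A(a_1)a_2$ is \emph{symmetric} in $(a_1,a_2)$, so transformations of type \eqref{auto-A} can never change the antisymmetric part of $\gamma_A$; and that antisymmetric part is a $2$-cocycle representing a generally nonzero bivector class, so it need not be a coboundary at all. The correct argument, used in the paper, is that skew-symmetry of $\alpha_A$ together with commutativity of $A$ implies that $(a_1,a_2)\mapsto \gamma_A(a_2,a_1)$ again solves \eqref{integrab-A}, so one may simply replace $\gamma_A$ by its symmetrization. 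Second, in part (ii) the adapted coordinates $Y=\{x_1=\cdots=x_k=0\}$ exist only Zariski-locally on $X$, whereas the proposition asserts a single bidifferential $\alpha_B$ for the whole affine pair $(A,B)$; local solutions do not glue for free (the discrepancies on overlaps are exactly the $H^1$-type obstructions the paper wrestles with later). The paper avoids this by a construction that is global on the affine variety: smoothness and affineness give splittings of $0 \to N^\vee_{X/Y} \to \Omega^1_X|_Y \to \Omega^1_Y \to 0$, and $\alpha_B$ is assembled from a pair $(\rho,\psi)$ solving \eqref{rp} together with those splittings.
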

\begin{proof} First,
note that $g \alpha_A =
 d_{_{\operatorname{Hoch}}}
 \alpha_B$ 
in $C^2(A, \End_\C B)$ means that \eqref{hoch-B} holds by  
definition of the  maps involved. This
proves (i).

\bigskip
\noindent
To prove $(ii)$ let $I\sset A$ denote the defining ideal
of the subvariety $Y$.

First, we are going to construct
a map $\alpha^0_B:\ B \otimes I \to B,$
the restriction of the cocycle $\alpha_B:\ B \o A\to B$, that we are
looking for, to $B\o I$.
Observe that the cocycle equation \eqref{hoch-B} implies that,
 the map $\alpha^0_B$ should satisfy the following two
constraints:
$$
\alpha^0_B(ba, x) - \alpha^0_B(b, ax) - b \alpha_A(a, x) = 0;\quad
- \alpha^0_B(b, xa) + \alpha^0_B(b, x) a - b \alpha_A(x, a) = 0,
$$
for any 
$b \in B,\, a \in A,\,x \in
I.$

We will define   $\alpha^0_B$ to be a map of the following form:
$$
\alpha^0_B(b, x) = \rho (db, x) + b \psi(x);
$$ 
where $\rho: \Omega^1_B \otimes (I/I^2) \to B$ is a $B$-bilinear map
and $\psi: I/I^2 \to B$ is a first order algebraic 
differential operator.
In terms of $\rho$ and $\psi$, the two constraints above translate into
the following pair of equations, for any
$a \in A,\,x \in
I,$
\beq{rp}
\psi(ax) - a \psi(x) = 1_B \alpha_X(a, x); \qquad \rho(d(a|_Y), x) = 2 
\cdot 1_B \alpha_X(a, x).
\eeq

We remark that the second equation in \eqref{rp} determines
$\rho$ uniquely, since every element of $B$ is an image of some $a \in
A$. Observe further that, for  $a,x \in I$,
we have $\alpha_A(a, x) \in I$ since $Y$ is a coisotropic subvariety.
Hence, in this case $1_B \alpha_A(a, x) = 0$.
We see that we may  (and will) use the second equation in \eqref{rp}
as a definition of $\rho$; the resulting map
$\rho$ is well-defined.

Observe next that  the first equation in \eqref{rp}
is a condition on the map $\sigma_\psi:\
I/I^2 \otimes \Omega^1_B \to B$,  the principal symbol
 of the first order differential operator $\psi$.
Specifically, the equation
says that $\sigma_\psi(a,x)=\frac{1}{2}\langle P|_Y,\, da\wedge dx\rangle$
for any $a\in I, x\in B$.
Again, we may (and will) use the latter equation as
the definition of $\sigma_\psi$. The resulting symbol
is well-defined since $P$ sends $I \otimes I$ to zero in $B$.

Recall next that,
for any  $\sigma_\psi$, one may find a differential
operator $\psi$ that has  $\sigma_\psi$ as its principal symbol.
Indeed, let ${\scr D}^{\leq 1}(N^\vee_{X/Y}, \mathcal{O}_Y)$
denote the space of first order algebraic differential operators
$N^\vee_{X/Y}\to\mathcal{O}_Y$.
The variety $Y$ being smooth and affine, one has 
 a short exact sequence, cf. \cite{MR0238860},
$$
0 \to Hom_{\mathcal{O}_Y}(N^\vee_{X/Y}, \mathcal{O}_Y) 
\to {\scr D}^{\leq 1}(N^\vee_{X/Y}, \mathcal{O}_Y)
\to Hom_{\mathcal{O}_Y}(N^\vee_{X/Y}\otimes_{\mathcal{O}_Y}
\Omega^1_Y, \mathcal{O}_Y) \to 0, 
$$
where the last arrow is the principal symbol map which is, therefore,
surjective. 

This completes the construction of the map $\alpha^0_B: B \otimes I \to
 B$.

\bigskip
\noindent
It remains to extend  $\alpha^0_B$ to construct
a cocycle $\alpha_B: B \otimes A \to
B$. To that end, 
note that since $Y$ is smooth and affine we can 
 choose a splitting of the short exact sequence
$$
0 \to N^\vee_{X/Y} \to \Omega^1_{X}|_Y \to \Omega^1_{Y} \to 0.
$$
Such a splitting yields a $B$-linear map 
$p: \Omega^1_A \otimes_A B \to I/I^2$.
Similarly, a splitting of the projection 
$T^*_X|_Y \onto T^*_Y$ yields  a $B$-linear map 
$q: \Omega^1_B \to \Omega^1_A \otimes_A B$.

Using the splittings, we define
$$
\alpha_B(b, a) = \rho(db, p(da)) - \frac{1}{2}P(q(da|_Y), q(db)) + b 
\psi(p(da)).
$$
It is clear that the resulting map $\alpha_B$
is an extension of $\alpha^0_B$.
Furthermore, an explicit calculation using identities
\eqref{rp}  shows that  the map $\alpha_B$ so defined
satisfies the requirements  of part $(ii)$ of the proposition.

\bigskip
\noindent
To prove part $(iii)$ we need to recall an explicit
version of the Hochschild-Kostant-Rosenberg isomorphism
for Hochschild cohomology.

The Hochschild complex that we are interested in is
the complex with terms $C^k(A, \End_C B)=Hom_\C(B \otimes A^{\otimes
k},B)$, equipped with the Hochschild
differential $d_{\hoch}.$ The Hochschild-Kostant-Rosenberg
theorem says that,
for $Y=\Spec B$  smooth, one has an isomorphism
$$
\alt:\ H^k(C^\hdot(A, \End_C B))\iso H^0(Y, \La^k N_{X/Y}).
$$
The isomorphism is constructed as follows.
Given, 
$\gamma\in Hom_\C(B \otimes A^{\otimes
k},B)$, one obtains, by restriction to the ideal of $Y$,
a polylinear map
$B \otimes I^{\otimes
k}\to B$. Let $\alt(\gamma)$ be the anti-symmetrization of
this map with respect to the last $k$ arguments.
One shows, that if $\gamma$ is a Hochschild
{\em cocycle}, i.e. $d_{\hoch}\gamma=0$, then
$\alt(\gamma)(b,x_1,\ldots,x_k)=0$ whenever
$x_i\in I^2$ for at least one  $i=1,\ldots, k$.
It follows that the map
$\alt(\gamma)$ descends to a   map
$\alt(\gamma):\ B\o \La^k_\C (I/I^2)\to B$.
Furthermore, the equation $d_{\hoch}\gamma=0$ insures that
the resulting map is 
$B$-polylinear, cf. \cite[Proposition 1.3.12]{Lo}
for a similar result in the case of Hochschild homology. 
We conclude that the map $\alt(\gamma)$ descends
 to a well-defined $B$-linear map $\alt(\gamma):\ \La_B^k(I/I^2)\to B$.
Giving such a map is the same as giving a section
of $\La^k N_{X/Y}$, and we are done.

We can now resume the proof of part (iii).
First of all, we
note that existence of \textit{some}  $\gamma_A$
is well-known, cf. e.g.  \cite{MR2062626}. By skew symmetry of 
$\alpha_A=\frac{1}{2}P$
 is follows immediately that $\frac{1}{2}(\gamma_A(a, b) +
\gamma_A(b, a))$ solves the same equation \eqref{integrab-A}.
Hence, from now on,
we assume that the bilinear map $\gamma_A$ is symmetric.

Assume now that the canonical bundle on $Y$
is trivial and choose a trivialization, that is, a
nowhere vanishing top degree differential form $\omega$ on $Y$. 
Let  $Lie_\partial(\omega)$ denote the 
Lie derivative of  $\omega$ with respect to
a vector field $\partial$ on $Y$. 
The assignment $\partial\mto Lie_\partial(\omega) \cdot \omega^{-1}$
gives a first order differential operator $T_Y \to B$.
We compose this  differential operator with the $B$-linear map $I/I^2 \to T_Y$ 
given by the restriction of the bivector $P$ to $Y$.
This way, we obtain 
a first order differential 
 operator $\psi: I/I^2 \to B$ that satisfies the identity 
$\psi(ax) - a \psi(x) = 1_B \alpha_X(a, x)$ (the first equation in
\eqref{rp}).

We use the 
above operator $\psi$ to construct a cocycle $\alpha_B: B \otimes A \to B$
following the procedure explained in the proof of part (ii).
The
resulting operator $\alpha_B: B \otimes A \to B$ satisfies
\begin{equation}
\label{pseudoLie}
\alpha_B(\alpha_B(b, x_1), x_2) - \alpha_B(\alpha_B(b, x_2), x_1)
- 2 \alpha_B(b,  \alpha_A(x_1, x_2))=0,\quad x_1, x_2 \in I.
\end{equation}

To complete the proof, we have to construct an 
 operator $\gamma_B: B \otimes A \to B$ that satisfies
the equation
\begin{equation}
\label{for_gamma_B}
d_{\hoch}\gamma_B(b,a,a')=
\alpha_B(\alpha_B(b, a), a') - \alpha_B(b, \alpha_A(a, a')) + b \gamma_A(a, a'),
\end{equation}
where the Hochschild differential
$d_{\hoch}:\ Hom_\C(B \otimes A, B) \to Hom_\C(B \otimes A \otimes A, B)$
is given by the formula
$d_{\hoch}\gamma_B(b,a,a'):=
\gamma_B(ba, a') - \gamma_B(b, aa')+ \gamma_B(b, a) a'.$

Let  $\eta(b, a, a')$ denote the RHS of \eqref{for_gamma_B}.
A straghtforward 
computation shows that $\eta$ is a Hochschild cocycle, explicitly,
one has
$$
\eta(ba, a', a'') - \eta(b, a a', a'') + \eta(b, a, a' a'') +
\eta(b, a, a') a'' = 0.
$$
We claim further that  $\eta$ gives the zero class in
Hochschild cohomology.
To see this we use the Hochschild-Kostant-Rosenberg isomorphism.
Thus, we must restrict
$\eta$ to $B \otimes I \otimes I$ and compute
$\alt(\eta)$. But equation \eqref{pseudoLie}
says that the RHS  of formula \eqref{for_gamma_B}
is symmetric in the last two arguments.
We conclude that $\alt(\eta)=0$. Hence,
 $\eta$ is a Hochschild coboundary, and part (iii)
follows.
\end{proof}

\begin{rem} Here we sketch another proof of part $(ii)$:
Equation \eqref{hoch-B} says that
 $d\alpha_B=\alpha_A\cdot Id_B$ holds in $C^2(A, \End_\C (B))$.
Let $D^\hdot(A, \End_\C (B))$ be a subcomplex of the Hochschild complex
formed by cochains given by multidifferential operators.
The arguments from \cite{MR2062626}, pp. 16-17 can be used
to show that the cohomology groups of the complex
 $D^\hdot(A, \End_\C (B))$ are $\wedge^\hdot N_{X/Y}$.
Therefore, it follows from a version of the
Hochschild-Kostant-Rosenberg result that the imbedding $\jmath:\
D^\hdot(A, \End_\C (B))\hookrightarrow
C^\hdot(A, \End_\C (B))$ induces an isomorphism
$H^\hdot(\jmath)$, on
cohomology. 

Now, 
the Poisson bivector on $X$  gives
a class $\alpha_A\cdot Id_B\in D^2(A, \End_\C (B))$.
Since $Y$ is coisotropic,
 $\jmath(\alpha_A\cdot Id_B)\in C^2(A, \End_\C (B))$
is a coboundary. By injectivity
of $H^2(\jmath)$
the class $\alpha_A\cdot Id_B\in D^2(A, \End_\C (B))$
is itself a coboundary, i.e. $d\alpha_B=\alpha_A\cdot
Id_B$ for some $\alpha_B\in D^2(A, \End_\C (B))$. A separate easy calculation
shows that the principal symbol of $\alpha_B$ is a linear combination of 
maps $Sym^2 \Omega_{A/\C} \to B$ and $\Omega_{A/\C} \otimes \Omega_{B/\C}
\to B$, i.e.
its component $Sym^2 \Omega_{B/\C} \to B$ is actually zero. 
\end{rem}

\begin{rem}
It can be shown using the arguments of the proof above and those of 
Section 4 below that the existence
of a (not necessarily split)
deformation for $\oo_Y$ with $Y$ coisotropic, is equivalent to the 
vanishing of a certain class in $H^1(Y, N_{X/Y})$. This class
is the cup product
of the Atiyah class in $H^1(Y, \Omega^1_Y \otimes_{\oo_Y} End(N_{X/Y}))$ 
with the image of $P$ in $H^0(Y, T_Y \otimes N_{X/Y})$.  
See Theorem 7 in \cite{BGP} for more detail. 
\end{rem}

\section{An algebraic construction of BV operators}

\subsection{Complexes computing 
${\operatorname{Tor}}_\bullet^A(B, C)$ and ${\operatorname{Ext}}^\bullet_A(B, C)$.}\label{TE}

In this subsection we fix a commutative
algebra $A$ and a pair of $A$-modules $B, C$.
We have associated algebra homomorphisms
$g: A\to\End_\C B$, resp. $h: A\to\End_\C C$.
Write $T(A)$ for the tensor algebra of the vector space $A$.

Recall that the $A$-module $B$ admits a free bar resolution 
$B \otimes T(A)  \otimes A \to B$, cf. \cite{MR1269324}. Therefore 
$\Tor_\idot^A(B, C)$ and $\Ext^\hdot_A(B, C)$ can be computed as
the cohomology 
of complexes $T_\idot$ and $E^\hdot$, respectively, with terms
$$
T_i = B \otimes A^{\otimes i}  \otimes C, \quad 
E^i = \Hom_k(B \otimes A^{\otimes i}, C)
$$
The corresponding differentials, $d_T$ and $d_E$ respectively,
 are given by
\begin{align*}
d_T(b \otimes a_1 \otimes \ldots \otimes a_i \otimes c) &=
ba_1 \otimes \ldots \otimes a_i \otimes c + (-1)^i
b \otimes a_1 \otimes \ldots \otimes a_{i-1} \otimes a_i c +\\
&+ \sum_{s=1}^{i-1} (-1)^s b \otimes a_1 \otimes \ldots \otimes a_s a_{s+1}
\otimes \ldots \otimes a_i \otimes c,\\
d_E \phi(b \otimes a_1 \otimes \ldots \otimes a_{i+1}) &=
- \phi(ba_1 \otimes \ldots \otimes a_{i+1}) + (-1)^{i-1}
\phi(b \otimes a_1 \otimes \ldots \otimes a_{i}) a_{i+1} \\
&+ \sum_{s=1}^{i-1} (-1)^{s-1} 
\phi(b \otimes a_1 \otimes \ldots \otimes a_s a_{s+1}
\otimes \ldots \otimes a_{i+1})
\end{align*}

We consider  deformations of the triple
$(A,B,C)$.
Such a deformation is determined 
by an element of the  deformation complex $C^2_{A,B,C}$ given by a cocycle
 $(\alpha_A,\alpha_B,\al_C),$
see \S\ref{def_comp_sec}. Working with $T_\idot$ we always assume that
$\alpha_B$ gives a deformation of $B$ to a right module and $\alpha_C$ 
a deformation of $C$ to a left module.

The triple $(\alpha_A, \alpha_B, \alpha_C)$ induces
an operation $\delta_\alpha: T_i \to T_{i-1}$ given essentially by the 
same formula as $d_T$ where $b a_1$ is replaced by $\alpha_B(b, a_1)$, 
resp.
$a_s a_{s+1}$ is replaced by $\alpha_A(a_s, a_{s+1})$ 
and $a_i c$ by $\alpha_C(a_i, c)$.
If, in addition $\alpha'_C: C \otimes A \to A$ gives a deformation to
a right module, then 
the triple $(\alpha_A, \alpha_B, \alpha'_C)$ induces
an operation $\delta'_\alpha: E^i \to E^{i+1}$ given by a formula similar
to $d_E$;
this time $\phi(X) a_i$ is replaced by $\alpha'_C(\phi(X), a_i)$. 

The following result is proved by direct computation
\begin{lemma}
\label{operations}
Let $\delta_\alpha$ be the operator on $T_\idot$ constructed from 
a triple $(\alpha_A, \alpha_B, \alpha_C)$. 
\begin{enumerate}
\item If $(\alpha_A, \alpha_B)$ is a cocycle in $C^2_{A,B}$ and 
$(\alpha_A, \alpha_C)$ is a cocycle in $C^2_h$ then, we have 
$$
\delta_\alpha d_T + d_T \delta_\alpha = 0.
$$
\item If $(\tilde{\alpha}_A, \tilde{\alpha}_B) - (\alpha_A, \alpha_B) 
= d (\beta_A, \beta_B)$ and $(\tilde{\alpha}_A, \tilde{\alpha}_C) - (\alpha_A, \alpha_C)
= d (\beta_A, \beta_C)$ then, we have 
$$
\delta_{\tilde{\alpha}} - \delta_\alpha = d_T \delta_\beta + \delta_\beta d_T,
$$
where
$$\delta_\beta(b \otimes a_1 \otimes \ldots \otimes a_n \otimes c)=
\be_B(b)\otimes a_1 \otimes \ldots \otimes a_n \otimes c +
$$
$$
+\sum_i (-1)^{i}b \otimes a_1 \otimes \ldots \otimes\be_A(a_i)\o
\ldots \o a_n\o c+ (-1)^{n+1}
b \otimes a_1 \otimes \ldots \otimes a_n \otimes \be_C(c).$$
\item If \eqref{integrab-A}, \eqref{integrab-B} hold (with 
similar equation and notation assumed for $\alpha_C$), then, we have 
$$
\delta^2_\alpha = d_T \delta_\gamma + \delta_\gamma d_T.
$$ 
\end{enumerate}

Similar identities hold for the map $\delta'_\alpha$ constructed
from $(\alpha_A, \alpha_B, \alpha_C')$, with $d_T$ replaced by $d_E$. 
\qed
\end{lemma}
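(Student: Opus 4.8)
The plan is to read all three identities off a single structural fact: $\delta_\alpha$ and $\delta_\gamma$ are the first- and second-order terms of the bar differential attached to a deformation of the triple $(A,B,C)$, while $\delta_\beta$ is the first-order term of a gauge isomorphism between two such deformations. For the $\Tor$ side I would introduce the deformed complex $T_\idot^{\ep}=T_\idot\otimes_\C\kep$ carrying the $\kep$-linear differential $d_T^{\ep}$ obtained from $d_T$ by replacing each product $ba_1$, $a_sa_{s+1}$, $a_ic$ by its deformed counterpart $ba_1+\ep\,\alpha_B(b,a_1)$, $a_sa_{s+1}+\ep\,\alpha_A(a_s,a_{s+1})$, $a_ic+\ep\,\alpha_C(a_i,c)$. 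By construction $d_T^{\ep}=d_T+\ep\,\delta_\alpha$ on $T_\idot\subset T_\idot^{\ep}$, and $d_T^{\ep}$ is the honest bar differential computing $\Tor_\idot^{A_\ep}(B_\ep,C_\ep)$ exactly when $A_\ep$ is associative and $B_\ep,C_\ep$ are modules, i.e. exactly when the cocycle equations \eqref{hoch-A}, \eqref{hoch-B} (and their analogue for $C$) hold. Expanding $(d_T^{\ep})^2=0$ and separating powers of $\ep$ gives $d_T^2=0$ together with $\delta_\alpha d_T+d_T\delta_\alpha=0$, which is part (1); since $\delta_\alpha$ inherits the Koszul signs of $d_T$, no sign ambiguity arises here.

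For part (3) I would push the same argument to order $\ep^2$ over $\C[\ep]/(\ep^3)$: the integrability equations \eqref{integrab-A}, \eqref{integrab-B} (with the analogous condition for $C$) are precisely what make $d_T+\ep\,\delta_\alpha+\ep^2\,\delta_\gamma$ a genuine differential modulo $\ep^3$, so the $\ep^2$-component of its square vanishes, giving $\delta_\alpha^2+d_T\delta_\gamma+\delta_\gamma d_T=0$ up to the sign fixed by the normalization of $\delta_\gamma$. For part (2), two cohomologous cocycles define gauge-equivalent deformations via the automorphisms $a\mapsto a+\ep\,\beta_A(a)$, $b\mapsto b+\ep\,\beta_B(b)$ of \eqref{auto-A}, \eqref{auto-B} (and the analogue for $C$); these induce an isomorphism $\Id+\ep\,\delta_\beta$ of deformed complexes intertwining $d_T^{\ep}$ for $\alpha$ and for $\tilde\alpha$, whose first-order term is the asserted homotopy $\delta_{\tilde\alpha}-\delta_\alpha=d_T\delta_\beta+\delta_\beta d_T$. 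The $\Ext$ statements are strictly parallel: one replaces $T_\idot$ by the deformed complex $E^\hdot\otimes_\C\kep$ with differential $d_E+\ep\,\delta'_\alpha$, now feeding the right-module deformation $\alpha'_C$ into the slot where $d_E$ uses the right action $\phi(\cdots)a_{i+1}$, and repeats the three expansions verbatim with $d_T$ replaced by $d_E$.

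I expect the only real difficulty to be the sign bookkeeping, and it is concentrated in part (2): the naive conjugation of $d_T^{\ep}$ by the all-plus automorphism produces an ungraded commutator $\delta_\beta^{\mathrm{naive}}d_T-d_T\delta_\beta^{\mathrm{naive}}$, whereas the clean anticommutator in the statement requires exactly the alternating signs $(-1)^i$ and $(-1)^{n+1}$ built into the definition of $\delta_\beta$. I would therefore confirm part (2) by direct expansion of $d_T\delta_\beta+\delta_\beta d_T$ on a generator $b\otimes a_1\otimes\cdots\otimes a_n\otimes c$, checking that terms acting at non-adjacent slots cancel in pairs by the Koszul signs and that terms at each adjacent junction reassemble into the coboundary expressions \eqref{auto-A}, \eqref{auto-B}. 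The same termwise expansion furnishes a self-contained verification of parts (1) and (3) as well: the ``far apart'' contributions cancel simplicially, while the ``overlapping'' ones collect, junction by junction, into the cocycle expressions \eqref{hoch-A}, \eqref{hoch-B} for part (1) and the integrability expressions \eqref{integrab-A}, \eqref{integrab-B} for part (3).
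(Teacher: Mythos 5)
Your structural route --- reading all three identities off the deformed bar complex with differential $d_T+\ep\,\delta_\alpha$ (and its order-$\ep^2$ extension) --- is sound, and it is essentially the mechanism the paper itself sets up immediately after the lemma, where it identifies the differential of $T^\ep_\idot$ as $d_\ep=d+\ep\delta_\alpha$; the paper's own ``proof'' is only the phrase ``proved by direct computation.'' Parts (1) and (3) do come out of your argument: the cocycle conditions are exactly associativity of $A_\ep$ and the module axioms for $B_\ep,C_\ep$, so $(d_T+\ep\delta_\alpha)^2=0$ gives (1); and for (3) your sign hedge can be removed, because with the convention of \eqref{integrab-A} the $\ep^2$-coefficient of the extended product is $-\gamma$, so the $\ep^2$-component of the square gives precisely $\delta_\alpha^2=d_T\delta_\gamma+\delta_\gamma d_T$.

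The gap is in part (2). Your gauge argument correctly yields the \emph{ungraded commutator} identity $\delta_{\tilde\alpha}-\delta_\alpha=Nd_T-d_TN$, where $N$ applies $\beta_B,\beta_A,\beta_C$ slot-wise with no signs; but your proposed fallback --- confirming the stated anticommutator $d_T\delta_\beta+\delta_\beta d_T$ with the alternating-sign $\delta_\beta$ by checking that non-adjacent terms ``cancel in pairs by the Koszul signs'' --- cannot succeed. The Koszul cancellation mechanism in an anticommutator relies on the homotopy shifting slot positions (as $d_T$ and $\delta_\alpha$ do, being degree-lowering); $\delta_\beta$ preserves degree, so whenever $\beta$ acts strictly to the left of the pair of slots merged by $d_T$, the two cross terms carry the \emph{same} total sign and add rather than cancel, and this defect persists for every choice of slot-wise signs. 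Concretely, on $b\o a\o c$ the anticommutator contains the spurious term $-2\,\beta_B(b)\o ac$, which cannot occur in $\delta_{\tilde\alpha}-\delta_\alpha$, whereas the unsigned commutator reproduces exactly $\big(\beta_B(ba)-b\beta_A(a)-\beta_B(b)a\big)\o c-b\o\big(\beta_C(ac)-a\beta_C(c)-\beta_A(a)c\big)$, i.e.\ the coboundary formulas \eqref{auto-A}--\eqref{auto-B}. So you should trust your structural derivation and record part (2) in commutator form (a harmless sign correction to the statement as printed): for everything the lemma is used for later --- that $\delta_{\tilde\alpha}-\delta_\alpha$ induces zero on homology, hence the BV operator is independent of the choice of cocycles --- a commutator with $d_T$ serves exactly as well as an anticommutator.
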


\bigskip
\noindent
We now interpret $\delta_\alpha$ in the context of the 
long exact sequence of Section 1.2. 
Since
$B_\ep$ is flat over $\kep$ we can construct a bar resolution using
tensor products over $\kep$:
$$
\ldots \to B_\ep \otimes_{\kep} A_\ep \otimes_{\kep} A_\ep 
\to B_\ep \otimes_{\kep} A_\ep \to B_\ep \to 0
$$
where the bar differential is defined using the deformed
product $A_\ep \otimes_{\kep} A_\ep \to A_\ep$ and the 
deformed action $B_\ep \otimes_{\kep} A_\ep \to B_\ep$. 
In particular, $\Tor_i^{A_\ep}(B_\ep, C_\ep)$ is the homology of the complex
with the $i$-th term
$$
T_i^{\ep} = B_\ep \otimes_{\kep} A^{\otimes_{\kep} i}_\ep \otimes_{\kep} C_\ep
\simeq \big[B\otimes A^{\otimes i} \otimes C \big] \oplus
\ep\big[B\otimes A^{\otimes i} \otimes C \big] = T_i \oplus \ep T_i
$$

It is easy to see that the differential of this complex is 
$d_\ep = d + \ep \delta_\alpha$. 
The spectral sequence of the filtered  complex
(with the two step filtration) 
$\ep T_\idot \subset T^\ep_\idot$ boils down to the long exact sequence
$$
\ldots \to H_i (T_\idot, d) \to H_i (T^\ep_\idot, d_\ep) \to 
H_i (T_\idot, d) \to H_{i-1} (T_\idot, d) \to \ldots
$$
By definition, we have 
 $H_i(T_\idot, d)= \Tor^A_i(B, C)$. The connecting differential
$\delta: H_i(T_\idot, d) \to H_{i-1}(T_\idot, d)$ is computed as usual:
we take a representative $x \in T_i \subset T_i^\ep$
and assume
that $d x = 0$, hence $d_\ep x  \in \ep T_i \in T^\ep_i$.
Then,  we 
let  $\delta (x)$  be represented by
the element $\frac{1}{\ep} d_\ep x$. 
Because of
the definition of $d_\ep$ this is precisely $\delta_\alpha$.

\bigskip
\noindent
In the case of $\Ext^\hdot_A(B, C)$, we assume that
both $B, C$ are deformed as right modules.  Again, if $(\alpha_A, \alpha_B, \alpha_C')$ 
satisfies the cocycle condition determining the first order deformation, 
the operation $\delta'_\alpha$ descends to $\delta'$ on $\Ext^\hdot_A(B, C)$
defined in Section 1.2. 
In fact, $\Ext^i_{A_\ep}(B_\ep, C_\ep)$ may be computed as the cohomology of 
the complex
\begin{multline*}
E^i_\ep = \Hom_{A_\ep} (B_\ep \otimes_{\kep} 
A_\ep^{\otimes_{\kep} i} \otimes_{\kep} A_\ep, C_\ep) 
= \Hom_{\kep}(B_\ep \otimes_{\kep} 
A_\ep^{\otimes_{\kep} i}, C_\ep)\\
= \Hom_{\kep} ([B \otimes A^{\otimes i}] \oplus \ep [B \otimes A^{\otimes i}], 
C \oplus \ep C)\\
=\Hom_k (B \otimes A^{\otimes i}, C) \oplus 
\ep \Hom_k (B \otimes A^{\otimes i}, C) =: E^i \oplus \ep E^i.
\end{multline*}
The differential $d_\ep$ again splits into $d_E + \ep \delta'_\alpha$.
Hence, the connecting differential 
$$
\Ext^{i-1}_A(B, C) \to 
\Ext^i_A(B, C)
$$ 
is induced by $\delta'_{\alpha}$.

\bigskip

Observe that for $\delta$, resp. $\delta'$, part (1) of the
Lemma \ref{operations} implies that $\delta_\alpha$, resp. $\delta'_\alpha$, 
does descend to (co)homology. Part (2) says that the operator on cohomology 
does not change if $(\alpha_A, \alpha_B, \alpha_C)$ is replaced
by $(\alpha_A, \alpha_B, \alpha_C) + d(\beta_A, \beta_B, \beta_C)$. 
Part (3) says that integrability conditions imply $\delta^2=0$, resp. 
$(\delta')^2 = 0$.

\subsection{Multiplicative properties of $T_\bullet$ and $E^\bullet$.}
We begin with a  general result.

\medskip
\noindent
Let $(D, d)$ be a differential associative
$\Z/2\Z$-graded algebra with an odd differential $d$  and 
an odd linear map
$\delta: D \to D$ such that $\delta(1)=0.$
We write $|x|\in \Z/2\Z$ for the parity of  a homogeneous element $x\in D$,
and  introduce the notation
 $[d,\de]_{_+}:=d\delta + \delta d$.
Define a
 bracket $[-,-]:\ D\times D\to D$ as follows
$$
[x, y]= \delta(xy) - \delta(x)y  - (-1)^{|x|}x \delta(y)\qquad
x,y\in D.
$$ 
Also,  for any  homogeneous elements $x,y,z\in D,$ put
$$\Xi(x,y,z):=\delta(xyz) - (-1)^{|x|}x \delta(yz) -  
\delta(xy)z - (-1)^{|y| (|x| - 1)} y \delta(xz). 
$$

A straightforward computation yields the following result
\begin{lemma}
\label{ident}
The following identities hold:
\begin{multline*}
\mbox{(1)}\qquad d[x, y] - [dx, y] - (-1)^{|x|}[x, dy] = 
[d,\de]_{_+}(xy) - 
[d,\de]_{_+}(x) y - x [d,\de]_{_+}(y)
\end{multline*}
\begin{multline*}
\mbox{(2)}\qquad [x, yz] - [x, y] z - (-1)^{|y| |x|}y [x, z]
= \Xi(x,y,z)\\
+ (-1)^{|x| + |y|}xy \delta(z) + 
(-1)^{|x|}x \delta(y) z +  \delta(x)yz.
\end{multline*}
\begin{multline*}
\mbox{(3)}\qquad[[x, y], z] + (-1)^{|x| (|y| + |z|)}[[y, z], x] + 
(-1)^{|z|(|x| + |y|)}[[z, x], y]\quad\phantom{x}\\
=\delta^2(xyz) 
- z \delta^2(xy) - x \delta^2(yz) - y \delta^2(xz) + yz \delta^2(x) + 
xz \delta^2(y) + zy \delta^2(z)\\
+ \delta(\Xi(x, y, z)) - \Xi(\delta(x), y, z) - 
(-1)^{|x|} \Xi(x, \delta(y), z) 
- (-1)^{|x| + |y|}\Xi(x, y, \delta(z)).
\end{multline*}
\end{lemma}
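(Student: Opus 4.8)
The three identities are purely formal and follow by expanding both sides, the only tools being associativity of the product in $D$, the graded Leibniz rule $d(uv)=(du)v+(-1)^{|u|}u(dv)$ (i.e. that $d$ is an odd derivation), the linearity of $\delta$ together with $\delta(1)=0$, and — in (2) and (3), where the displayed right-hand side fixes a particular order of the factors — the graded-commutativity available in the cases of interest. The plan is therefore to expand each left-hand side into a sum of monomials, classified by how $\delta$ (and, in (1), $d$) is distributed among the arguments, and to recognize the result as the listed right-hand side. The only genuine labor is Koszul sign-tracking; the point worth flagging at the outset is that $[-,-]$ lowers degree by one, so the operator $[x,-]$ carries degree $|x|-1$, and this shift must be respected whenever it, or $\delta$, or $d$, is commuted past a homogeneous factor.

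For (1) I would expand $d[x,y]$ by applying the Leibniz rule to each of the three terms of $[x,y]$, and simultaneously expand $[dx,y]$ and $[x,dy]$ from the definition (using $|dx|=|x|+1$). The monomials in which $d$ differentiates a factor sitting \emph{inside} a $\delta$-grouping recombine, via the Leibniz rule, into $\delta d$ applied to the corresponding product; these pair with the $d\delta$-monomials to produce exactly $[d,\delta]_+(xy)$, $[d,\delta]_+(x)\,y$ and $x\,[d,\delta]_+(y)$, while the genuinely mixed cross-terms of the shape $\delta(x)\,dy$ and $(dx)\,\delta(y)$ occur twice with opposite signs and cancel. Collecting the survivors gives the right-hand side; note that (1) requires no commutativity.

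For (2) I would expand $[x,yz]$, $[x,y]z$ and $(-1)^{|x||y|}y[x,z]$ directly. The monomials in which $\delta$ is applied to $xyz$, to $x$ alone, and to the three two-fold groupings $yz$, $xy$, $xz$ assemble, after the appropriate reorderings, into $\Xi(x,y,z)$; the remaining monomials, in which $\delta$ lands on a single one of $x,y,z$, assemble into the correction $\delta(x)yz+(-1)^{|x|}x\delta(y)z+(-1)^{|x|+|y|}xy\delta(z)$. Here graded-commutativity is used to bring a factor such as $y\delta(x)z$ into the displayed order $\delta(x)yz$, and the sign $(-1)^{|y|(|x|-1)}$ in front of $y\delta(xz)$ must be matched against the Koszul sign produced in this reordering.

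Identity (3) is the substantive computation and the step I expect to be the main obstacle. The plan is to expand the cyclic sum $[[x,y],z]+(-1)^{|x|(|y|+|z|)}[[y,z],x]+(-1)^{|z|(|x|+|y|)}[[z,x],y]$ by applying first the outer and then the inner bracket, producing monomials carrying up to two applications of $\delta$, and to sort them into three families: (i) monomials with $\delta^2$ on some subproduct, which should reassemble into the seven $\delta^2$-terms on the right; (ii) monomials with a single $\delta$ applied within a $\Xi$-pattern, which should assemble into $\delta(\Xi(x,y,z))-\Xi(\delta x,y,z)-(-1)^{|x|}\Xi(x,\delta y,z)-(-1)^{|x|+|y|}\Xi(x,y,\delta z)$; and (iii) the remaining cross-terms $\delta(u)\,\delta(v)$ in which each $\delta$ hits a nontrivial grouping. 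The crux is to show that family (iii) cancels entirely in the cyclic sum: each such cross-term arises from two of the three summands with opposite accumulated Koszul signs, so they annihilate in pairs. Verifying this pairwise cancellation — keeping every factor in its correct position and confirming that the two signs are genuinely opposite — is the delicate part; once it is established, families (i) and (ii) are precisely the right-hand side and the identity follows.
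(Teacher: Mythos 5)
Your method---expand everything and track Koszul signs---is necessarily the same as the paper's, whose entire proof of this Lemma is the phrase ``a straightforward computation,'' and your observation that (2) and (3) require graded-commutativity (absent from the Lemma's hypotheses, but true in the intended application, the shuffle algebra $T_\idot$) is correct and worth making. The trouble is that the whole mathematical content of this Lemma \emph{is} the sign bookkeeping, and the two cancellation mechanisms your proposal commits to are both wrong; this shows the expansion was never actually carried out, and carrying it out faithfully would not confirm the statement as printed.

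For (1): with the minus signs as printed, the monomials $\delta((dx)\,y)$ and $(-1)^{|x|}\delta(x\,dy)$ enter the left-hand side with coefficient $-1$ (they come from $-[dx,y]$ and $-(-1)^{|x|}[x,dy]$), so they assemble to $-\delta(d(xy))$ and the expansion produces the \emph{commutator} $(d\delta-\delta d)$ applied to $xy$, $x$, $y$, not the anticommutator $[d,\delta]_+$; moreover the cross-terms $\delta(x)\,dy$ and $(dx)\,\delta(y)$ then occur twice with the \emph{same} sign and double rather than cancel. In fact the printed identity is false: in $D=\C[t]\otimes\La[\theta]$ with $d=\theta\,\partial_t$ and $\delta$ vanishing on all basis monomials except $\delta(t\theta)=t$, taking $x=y=t$ gives $-2t$ on the left and $+2t$ on the right. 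The cancellations you describe are exactly those proving the sign-corrected variant $d[x,y]+[dx,y]+(-1)^{|x|}[x,dy]=[d,\delta]_+(xy)-[d,\delta]_+(x)\,y-x\,[d,\delta]_+(y)$, so your plan can only succeed after the signs of the statement are repaired (there are similar slips in (2) and (3); e.g.\ the term $zy\,\delta^2(z)$ should be $xy\,\delta^2(z)$). For (3): your proposed crux---that all split terms $\delta(u)\delta(v)$ annihilate in pairs within the cyclic sum---fails. A term such as $\delta(xy)\delta(z)$ occurs exactly once in the cyclic sum (inside $[[x,y],z]$, coming from $[x,y]\,\delta(z)$) and cancels against nothing there; it is instead matched by the identical cross-term sitting inside $-(-1)^{|x|+|y|}\Xi(x,y,\delta(z))$ on the right-hand side, since $\Xi(x,y,\delta(z))$ contains $-\delta(xy)\delta(z)$. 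Only the singleton-singleton split terms such as $\delta(x)\,y\,\delta(z)$ cancel between two of the three cyclic summands (and that step uses commutativity); the pair-singleton split terms must be sorted against the split parts of the three $\Xi$-terms with a $\delta$ inserted. Without this correct sorting---$\delta^2$-terms against the seven $\delta^2$-terms, nested terms against $\delta(\Xi(x,y,z))$ plus the nested parts of the $\Xi(\delta(\cdot))$-terms, split terms partly cancelling and partly matching---the verification does not close.
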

\bigskip

Recall next that the algebra structure on Tor-groups
may be defined using the shuffle product. In more detail,
according to \cite{MR1269324}, Exercise 8.6.5, Section 8.7.5 and
Lemma 8.7.15 (as well as a similar statement for $\Ext$ groups),
one has

\begin{lemma}\label{lod} \vi The algebra $\Tor^A_\idot(B, C)$ is 
isomorphic to the homology of the
DG algebra $T_\idot$ with the shuffle product $\bullet: T_i \otimes T_j \to 
T_{i+j}$ given by 
\begin{multline*}
(b \otimes a_1 \otimes \ldots \otimes a_i \otimes c) 
\bullet (b' \otimes a_{i+1} \otimes \ldots \otimes a_{i+j} \otimes c') = \\
\sum_{\sigma \in Sh(i, j)} (-1)^\sigma
b b' \otimes a_{\sigma(1)} \otimes \ldots \otimes a_{\sigma(i+j)} \otimes c c'.
\end{multline*}

\vii The $\Tor^A_\idot(B, C)$-module $\Ext^\hdot_A(B, C)$ is isomorphic to
the cohomology of the $T_\idot$-module $E^\hdot$ with the action 
$\bullet': T_i \otimes E^j \to E^{j-i}$ given by 
\begin{multline*}
(b \otimes a_1 \otimes \ldots \otimes a_i \otimes c) 
\bullet' \phi (b' \otimes a_{i+1} \otimes \ldots \otimes a_{j}) =\\
\sum_{\sigma \in Sh(i, j)}  (-1)^\sigma 
\phi(bb' \otimes a_{\sigma(1)} \otimes \ldots \otimes a_{\sigma(i+j)})c.
\end{multline*}
\end{lemma}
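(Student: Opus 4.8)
The plan is to treat both assertions as instances of the standard identification of the shuffle (Eilenberg--Zilber) product on the bar complex with the canonical product on $\Tor$, together with its dual action on $\Ext$; the only genuine computation will be a chain-level Leibniz identity, everything else being the bookkeeping needed to match the conventions of \cite{MR1269324}.

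First I would recall that $T_\idot$ and $E^\hdot$ both arise from the bar resolution $B\otimes T(A)\otimes A\to B$, whose $i$-th term $B\otimes A^{\otimes i}\otimes A$ is free as a right $A$-module via the rightmost factor. Applying $(-)\otimes_A C$ gives $(B\otimes A^{\otimes i}\otimes A)\otimes_A C=B\otimes A^{\otimes i}\otimes C=T_i$, so that $H_\idot(T_\idot,d_T)=\Tor^A_\idot(B,C)$; dually, applying $\Hom_A(-,C)$ gives $\Hom_A(B\otimes A^{\otimes i}\otimes A,C)=\Hom_\k(B\otimes A^{\otimes i},C)=E^i$, so that $H^\hdot(E^\hdot,d_E)=\Ext^\hdot_A(B,C)$. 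The two displayed formulas presuppose that $B$ and $C$ carry $A$-algebra structures, providing the multiplications $B\otimes B\to B$ and $C\otimes C\to C$ that enter the $bb'$ and $cc'$ factors; this holds in the situation of interest, where $B$ and $C$ are coordinate rings.

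For part (i), the core step will be to verify that the shuffle map $\bullet\colon T_i\otimes T_j\to T_{i+j}$ is a map of complexes, i.e. that $d_T$ acts as a graded derivation for $\bullet$. Distributing $d_T$ across a shuffle produces boundary terms, merging $b$ into the first slot or the last slot into $c$, that reassemble into $(d_Tx)\bullet y$ and $(-1)^{|x|}x\bullet(d_Ty)$, together with interior terms merging adjacent slots $a_sa_{s+1}$. Interior merges of two slots both originating from $x$ (or both from $y$) contribute further derivation terms, whereas a merge of a slot from $x$ with an adjacent slot from $y$ occurs for exactly two shuffles that differ by the transposition of those indices and carry opposite signs $(-1)^\sigma$, so these cross terms cancel in pairs. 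This is the classical DG-algebra property of the bar construction; once $\bullet$ is known to be a chain map it descends to $H_\idot(T_\idot)=\Tor^A_\idot(B,C)$, and the identification of the induced product with the canonical graded-commutative $\Tor$-product is \cite[Exercise~8.6.5, \S8.7.5]{MR1269324}. For part (ii), I would define $\bullet'\colon T_i\otimes E^j\to E^{j-i}$ by feeding the $i$ middle slots of a $T_i$-chain, shuffled together with the arguments of the test element, into $\phi$, while using the $B$-slot to augment the input and the $C$-slot to post-multiply the $C$-valued output of $\phi$. The verification that $\bullet'$ is a chain map for $(d_T,d_E)$ runs entirely parallel to part (i): the interior cross terms cancel by the shuffle signs, and the remaining terms reassemble into $(d_Tt)\bullet'\phi$ and $(-1)^{|t|}t\bullet'(d_E\phi)$. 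Passing to cohomology then makes $\Ext^\hdot_A(B,C)$ a graded module over $\Tor^A_\idot(B,C)$, and the agreement of this action with the canonical $\Tor$-module structure on $\Ext$ recalled in \S3.2 is \cite[Lemma~8.7.15]{MR1269324}.

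The hard part will be the sign-tracking in these two Leibniz verifications: one must confirm that the only surviving interior contributions of $d_T$ applied to a shuffle are those merging two slots belonging to the same factor, and that the genuinely mixed merges cancel pairwise against the shuffle signs $(-1)^\sigma$. Once this combinatorial cancellation is in hand, everything else reduces to the standard dictionary between the bar resolution and $\Tor$/$\Ext$ and to citing \cite{MR1269324}.
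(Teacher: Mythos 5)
Your proposal is correct and matches the paper's treatment: the paper offers no proof beyond the citation to \cite{MR1269324} (Exercise 8.6.5, \S 8.7.5, and Lemma 8.7.15), which is exactly the source you invoke for identifying the shuffle product and its action with the canonical structures on $\Tor$ and $\Ext$. The chain-level Leibniz/sign-cancellation verification you sketch is precisely the content those references encapsulate, so you are fleshing out the same argument rather than taking a different route.
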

\medskip

In order to be able to apply 
Lemma \ref{ident} in the
situation we are interested in, we need the following
result.

\begin{lemma}
\label{poisson}
Let $(T=T_\idot,\, d=d_T)$ be the $DG$ algebra 
computing the $\Tor$ groups and let
$\delta = \delta_\alpha$, as
in section \ref{TE}. Then,
the equation $\Xi(x, y, z) = 0$ holds for all $x, y, z$ 
if and only if, for all $b_1, b_2 \in B$, $a \in A$, 
the cochain $\alpha_B$ satisfies
\begin{equation}
\label{2nd}
\alpha_B(b_1 b_2, a) - b_1 \alpha_B(b_2, a) - b_2 \alpha_B(b_1, a) 
+ b_1 b_2 \alpha_B(1, a) = 0
\end{equation}
and the cochain $\alpha_C$ satisfies a similar identity.

Let $\alpha'_C$ be the {\em transposed}
deformation defined by the formula
$\alpha'_C(c,a):=-\alpha_C(a,c)$.
Then,  the above conditions also insure the
following identity
\begin{equation}
\label{poisson-BVmod}
\delta_\alpha(xy) m - x \delta_\alpha(y)m - y \delta_\alpha(x) m = 
\delta'_\alpha(xym) - x \delta'_\alpha(ym) - y \delta'_\alpha(xm) + 
xy \delta'_\alpha(m).
\end{equation}
\end{lemma}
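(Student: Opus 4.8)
The plan is to establish both claims by direct expansion, using the explicit shuffle product of Lemma~\ref{lod} together with the explicit formula for $\delta_\alpha$ from Section~\ref{TE}. I would first split $\delta_\alpha = \delta_A + \delta_B + \delta_C$, where $\delta_A$ gathers the interior terms applying $\alpha_A$ to an adjacent pair of $A$-slots, $\delta_B$ is the single leftmost term applying $\alpha_B$ to the pair $(b, a_1)$, and $\delta_C$ the single rightmost term applying $\alpha_C$ to $(a_i, c)$. Taking homogeneous simple tensors $x, y, z \in T_\idot$ and substituting the shuffle formula into each of the four summands $\delta(x\bullet y\bullet z)$, $x\bullet\delta(y\bullet z)$, $\delta(x\bullet y)\bullet z$, $y\bullet\delta(x\bullet z)$ of $\Xi(x,y,z)$ expands everything into a large signed sum indexed by shuffles and by the slot at which a cochain acts.

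The core of the argument is a cancellation count. First I would show that the $\delta_A$-contributions drop out of $\Xi$ unconditionally: because $\alpha_A$ merely combines two interior $A$-factors into one and never multiplies module components, the $\delta_A$-part is a first-order (derivation-type) operation with respect to $\bullet$, hence invisible to the second-order deviation measured by $\Xi$. Next I would check that the terms in which $\alpha_B$ reads a $B$-component from a single one of $x, y, z$ (and likewise for $\alpha_C$) cancel against the subtracted terms $x\bullet\delta(y\bullet z)$ etc., again with no hypothesis needed. The only surviving terms are those where $\alpha_B$ is applied to a product $b_1 b_2$ of two $B$-components, or to $b_1 b_2 b_3$, and the mirror-image terms for $\alpha_C$ acting on products of $C$-components. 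Collecting the coefficient of a fixed shuffled monomial, the surviving $\alpha_B$-part is precisely the left-hand side of \eqref{2nd}, and symmetrically for $\alpha_C$. Reading off this coefficient gives the equivalence: the ``only if'' direction follows by specializing two arguments to carry $b_1, b_2$ and the third to carry $1_B$, recovering \eqref{2nd} exactly, while the ``if'' direction uses that \eqref{2nd} (order $\le 1$ in the module variable) forces the vanishing of the three-fold deviation as well.

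For the module identity \eqref{poisson-BVmod} I would run the same expansion, now pairing $\delta_\alpha$ on $T_\idot$ against $\delta'_\alpha$ on $E^\hdot$ through the action $\bullet'$ of Lemma~\ref{lod}(ii). The transposition relation $\alpha'_C(c,a) = -\alpha_C(a,c)$ is exactly what matches the $C$-boundary contributions of $\delta'_\alpha$ appearing on the right-hand side with the $\alpha_C$-contributions of $\delta_\alpha$ on the left, the sign flip accounting for the passage from the $\Tor$-side to the $\Ext$-side. Expanding both sides and invoking the cancellations of the previous paragraph, now available because \eqref{2nd} holds for both $\alpha_B$ and $\alpha_C$, the two sides are seen to agree term by term.

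I expect the main obstacle to be the sign- and shuffle-bookkeeping in the second paragraph: one must verify that the $\delta_A$-terms and the single-factor $\delta_B$/$\delta_C$-terms cancel completely, and that the residual coefficients reassemble, with the correct signs, into \eqref{2nd}. This is routine but lengthy, and is where an error is most likely to hide; organizing the expansion by which cochain acts, and on how many module components, keeps the count tractable.
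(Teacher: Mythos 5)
Your plan of attack coincides with the paper's own proof of this lemma: expand $\Xi$ on simple tensors via the shuffle product, sort the terms according to which of $\alpha_A,\alpha_B,\alpha_C$ acts, show the $\alpha_A$-terms vanish unconditionally, and identify the vanishing of what survives with \eqref{2nd}. However, the two cancellation claims that carry your second paragraph are both false as stated, and repairing them is precisely the content of the paper's computation. First, the $\alpha_A$-part of $\delta_\alpha$ is \emph{not} a derivation-type (first-order) operator for $\bullet$: take $x=b\otimes a\otimes c$ and $y=b'\otimes a'\otimes c'$ in $T_1$; then $\delta_A x=\delta_A y=0$ (neither has an adjacent pair of $A$-slots), while $\delta_A(x\bullet y)$ contains the terms $bb'\otimes\alpha_A(a,a')\otimes cc'$ and $bb'\otimes\alpha_A(a',a)\otimes cc'$, which are nonzero in general. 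These ``mixed'' terms, in which $\alpha_A$ couples an $A$-slot of $x$ with one of $y$, are exactly the source of the (generally nonzero) Gerstenhaber bracket, so if $\delta_A$ really were first order, the bracket produced by the whole construction would lose its $\alpha_A$-part and the theory would trivialize. What is true --- and what the paper proves --- is that $\delta_A$ has order $\leq 2$: reading $\Xi$ as the full three-argument second-order deviation (this is how the paper's own computation treats it, consistently with Lemma \ref{ident}), each mixed pair of slots occurs once in $\delta(xyz)$ and once in $\delta(xy)z$ with opposite signs, and each pure pair (both slots from $x$, say) occurs in $\delta(xyz)$, $\delta(xy)z$, $y\delta(xz)$, $\delta(x)yz$ with signs $+,-,-,+$; it is this inclusion--exclusion over pairs of blocks, not any derivation property, that kills the $\alpha_A$-contribution.

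Your second claim fails as well: the terms in which $\alpha_B$ sees only one $B$-component do \emph{not} cancel ``with no hypothesis needed.'' In every term of $\delta(x\bullet y\bullet z)$ the first argument of $\alpha_B$ is the full product $b_xb_yb_z$; the single-factor and double-product terms arise from the subtracted summands, and collecting all terms of $\Xi$ whose second $\alpha_B$-argument is $x_1$ yields the alternating sum $\alpha_B(b_xb_yb_z,x_1)-b_y\alpha_B(b_xb_z,x_1)-b_z\alpha_B(b_xb_y,x_1)+b_yb_z\alpha_B(b_x,x_1)$, in which the triple product, both double products, and the single factor $b_x$ all occur. No part of this cancels for free: the entire content of the lemma is that the vanishing of this four-term sum for all $b_x,b_y,b_z$ is equivalent to \eqref{2nd} (specialize $b_x=1$ for one direction; apply \eqref{2nd} repeatedly for the converse, which is how the paper argues). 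So while your overall route is the correct one --- it is the paper's --- the bookkeeping at both decisive steps is wrong, exactly where you yourself predicted an error was most likely to hide, and as written the proof does not go through without redoing those counts.
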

\begin{proof} Let 
$
x= b_x \otimes x_1 \ldots x_{l_x} \otimes c_x
$
and use the similar notation for $y, z$. If we plug in the formula
for $\delta$ into the definition of $\Xi(x, y, z)$ we get three kinds of terms:
those which involve $\alpha_B$, $\alpha_A$ and $\alpha_C$, respectively.
For instance, the terms in $\delta(xyz)$ coming from $\alpha_A$, will involved
tensor factors of the type   
$$
\alpha_A(x_i, x_{i+1}), \alpha_A(y_j, y_{j+1}), \alpha_A(z_s, z_{s+1}), 
$$
$$
\alpha_A(x_i, y_j), \alpha_A(y_j, x_i), \alpha_A(x_i, z_s), 
\alpha_A(z_s, x_i), \alpha_A(y_j, z_s), \alpha_A(z_s, y_j)
$$
For $x\delta(yz)$ we need to include only those terms in which $\alpha_A$ is
applied to $y_j$ ans $z_s$ but not to $x_i$, and so on. Hence the terms in 
$\Xi(x, y, z)$ which depend on $\alpha_A$ cancel out by inclusion-exclusion
formula. 
Looking at terms which involve $\alpha_B$ we get for $\delta(x, y, z)$:
$$
\alpha_B(b_x b_y b_z, x_1), \alpha_B(b_x b_y b_z, y_1),
\alpha_B(b_x b_y b_z, z_1)
$$
For $x \delta(yz)$ we get 
$$
b_x \alpha_B(b_y b_z, y_1) + b_x \alpha_B(b_y b_z, z_1)
$$
and similarly for other summands in $\Xi(x, y, z)$. Extracting the 
terms which only contain $x_1$ we get 
$$
\alpha_B(b_x b_y b_z, x_1) - b_y \alpha_B(b_x b_z, x_1) - 
b_z \alpha_N(b_x b_y, x_1) + b_y b_z \alpha_B(b_x, x_1) = 0
$$
For $b_x = 1$ this gives \eqref{2nd}. On the other hand, if \eqref{2nd}
holds then
\begin{multline*}
\alpha_B(b_x b_y b_z, x_1) = b_y \alpha_B(b_x b_z, x_1) + 
b_x b_z \alpha_B(b_y, x_1) - b_x b_y b_z \alpha_B(1, x_1)\\
= b_y \alpha_B(b_x b_z, x_1) + b_z \big[b_x \alpha_B (n_y, x_1) 
- b_x b_z \alpha_B(1, x_1) \big]\\
= b_y \alpha_B(b_x b_z, x_1) + b_z \big[\alpha_B (b_y b_x, x_1) 
- b_y \alpha_B(b_x, x_1) \big] 
\end{multline*}
as required. 

The calculation for \eqref{poisson-BVmod} is similar: for 
terms involving $\alpha_B: B \otimes A \to B$ we get precisely \eqref{2nd}. 
Comparing the terms involving $\alpha_C: A \otimes C \to C$ and 
$\alpha_C': C\otimes A \to C$ we get the condition \eqref{2nd} for
$\alpha_C'$ plus the equation
$$
c_3 \alpha_C(a, c_1 c_2) - c_3 c_2 \alpha_C(a, c_1) = 
- \alpha_C'(c_1 c_2 c_3, a) + c_2 \alpha_C'(c_1 c_3, a) 
$$
Since $\alpha_C'$ is transposed to $\alpha_C$ this equation 
can also be reduced to  
\eqref{2nd} for $\alpha_C'$.  \end{proof}

\section{Proofs of main results}
\subsection{The transposed deformation}
\label{transposed} Fix a flat $\kep$-algebra deformation
$\aa$, of $\oo_X$.
Associated with  any deformation
$\cc$, of the sheaf $\oo_Z$ to a left  $\aa$-module, 
there is a transposed
deformation $\cc^t$, which gives a sheaf of right $\mathcal{A}$-modules.

To explain the definition of  $\cc^t$, recall first that
any deformation  $\cc$
 admits local $\k$-linear 
splittings (in the Zariski topology) 
$\cc \simeq \oo_Z \oplus \ep \oo_Z$. 
So, locally, the deformed module structure can be written
in the form
$(a \oplus  0)* (c \oplus 0)  = ac \oplus \ep \alpha(a, c)$.
Furthermore, we will see in Proposition \ref{exist} below
that the cochain
$\alpha(a, c)$ can be chosen to be
 an algebraic differential operator in each of its 
arguments (which satisfies an associativity condition recalled in 
Section 2).  
Thus,
 $X$ has a covering by affine open subsets $U_i$ and on each of them 
there is a splitting
 as above. It follows that, on each double intersection $U_i \cap U_j$,
 the
corresponding splittings differ by an automorphism
$$
c_1 \oplus \ep c_2 \mapsto c_1 \oplus \ep (\psi_{ij}(c_1) + c_2),
$$
where   $\psi_{ij}: \oo_Y \to \oo_Y$ is
an algebraic differential operator.
The gluing condition for the locally defined 
cochains $\alpha_i(a, c)$ and $\alpha_j(a, c)$ reads:
\beq{psi}
\alpha_i(a, c) - \alpha_j(a, c) = a \psi_{ij}(c) - \psi_{ij}(ac).
\eeq

Conversely, given a covering of $X$,  a collection of
$\alpha_i$'s describing a deformation of $\oo_Y|_{U_i}$ to a left 
module over $\mathcal{A}|_{U_i}$, and a set of operators $\psi_{ij}$
such that \eqref{psi} holds and, moreover, for each
 triple $(i,j,k)$, one has
$\psi_{ij} + \psi_{jk} = \psi_{ik}$, gives rise to
a deformation $\cc$ of $\oo_Z$ to a left $\mathcal{A}$-module 
$\cc$ if the above gluing conditions are satisfied. 
A similar statement holds for right  $\mathcal{A}$-module 
deformations as well.

Now, given a left  $\mathcal{A}$-module deformation
$\cc$, we define  $\cc^t$, the transposed right  $\mathcal{A}$-module 
deformation, by gluing locall deformations
given, on each $U_i$, 
by the formula 
$$
(c \oplus 0) * (a \oplus 0) = ac \oplus - \ep \alpha(a, c),
\quad\text{i.e. we put $\alpha^t_i(c, a) := - \alpha_i(a, c)$}.
$$
Here, the minus signs appear since the opposite algebra $\mathcal{A}^{op}$ may 
be viewed as a deformation coming from the bivector $-P$. 
The above defined local deformations are related, on double
intersections, via the operators $\psi_{ij}^t := - \psi_{ij}$.

\subsection{Proof of Theorem \ref{BV}(i): BV differential}
Given $(\mathcal A,\mathcal B,\mathcal C)$, a triple of deformations 
as in the theorem,
we use the construction of the map $\delta:
{\scr T}\!or^{\oo_X}_\idot(\oo_Y,\oo_Z)$ $\to
{\scr T}\!or^{\oo_X}_{\idot-1}(\oo_Y,\oo_Z)$ from section
\ref{bvdif}. The construction being local,
the map $\delta$ restricts to a similar
map on any open affine subset of $X$.
Clearly, in order to verify the required properties
of $\delta$, it is sufficient to verify them
for an open affine covering of $X$.

This way, we reduce the proof to the case where $X = \Spec A,$
resp.
$Y=\Spec B$ and $Z=\Spec C.$
For affine varieties all the deformations involved are 
automatically {\em split} over $\k$.
It follows that these deformations may be written in terms of
 certain cocycles $\alpha_A, \alpha_B$, and $\alpha_C$,
respectively, as in Section \ref{setup}.
Now, we are in the setting of Sections 3.2-3.3.
In particular, we may use the DG algebra $(T_\idot,d_T)$
and Lemma \ref{lod} for computing the algebra
$Tor^A_\idot(B,C)$
and we may interpret the map
$\delta$ in terms of the Bar construction.

Observe next that, thanks to
Lemma \ref{operations}(2), replacing deformations
by equivalent deformations doesn't affect the
conclusion of the theorem.
Therefore, we may adjust our cocycles using
Proposition \ref{exist} to insure that:
(1) the cocycle  $\alpha_A$ be equal to the Poisson bracket,
in particular, it can be extended to a second order deformation
$(\alpha_A,\gamma_A)$;
and (2) the cocycle $ \alpha_B$, resp. $\alpha_C$,
can also be extended to a second order deformation
and, moreover, it is given by a bidifferential operator,
as in Proposition \ref{exist}  $(ii)$. Here, the existence
of extensions to second order deformations means that
there exist $\gamma_B$ and $\gamma_C$ such that
for $\alpha:=\alpha_A\oplus\alpha_B\oplus\alpha_C$,
in the deformation complex $C^\hdot_{A,B,C}$, see \eqref{c_gh}, we have
$[\alpha,\alpha]=d_{A,B,C}(\gamma_A\oplus\gamma_B\oplus\gamma_C)$;
in particular, the second order deformations 
of $B$ and $C$ given by $(\alpha_B, \gamma_B)$
and $(\alpha_C, \gamma_C)$, respectively, correspond to the
\textit{same} second order deformation of $A$ 
given by a pair $(\alpha_A, \gamma_A)$, as guaranteed by 
Proposition \ref{exist} $(iii)$.

Recall that the existence of a second order extension 
of the deformation given by $ (\alpha_A,\alpha_B)$
is equivalent to  equations \eqref{integrab-A}-\eqref{integrab-B}.
Hence, we deduce $[d_T,\delta]_+=0$,
by Lemma \ref{operations}(1). Also,
equation $\delta^2=0$ follows
from Lemma \ref{operations}(3).
Further, the constraints from Proposition \ref{exist}$(ii)$
on the
order of  bidifferential operators
insure that
the
assumption $\Xi(x,y,z)=0$, of Lemma \ref{poisson},
holds in our case. Hence,
combining Lemma \ref{poisson} with
Lemma \ref{ident}, we deduce 
that   the Poisson and  Jacobi identities hold already
in the algebra $T_\idot$.
We conclude that these  identities hold
in the $\Tor$ algebra as well, and the theorem follows. 
\qed
\medskip

The proof of the properties of the differential
$\delta'$ on the Ext-sheaves
 is completely similar and is left for the reader.

\subsection{Proof of Theorem \ref{BV}(ii)-(iii): Gerstenhaber structures}
Let $Y,Z$
be smooth coisotropic submanifolds of a smooth
Poisson variety $X$. We must prove the following

\begin{thm}\label{7} 
The sheaf of graded algebras ${\scr{T}\!}or^{\mathcal{O}_X}_\idot
 (\mathcal{O}_Y, 
\mathcal{O}_Z)$ admits a canonical structure of a Gerstenhaber algebra, i.e.
a graded symmetric bracket of degree (-1) which satisfies Poisson and Jacobi
identities.

Similarly, the sheaf $\scr{E}^{\!}xt^\hdot_{\mathcal{O}_X}
(\mathcal{O}_Y, \mathcal{O}_Z)$ has a canonical structure of a Gerstenhaber
module, i.e. a bracket 
$[\cdot, \cdot]': {\scr{T}\!}or_i \otimes \scr{E}^{\!}xt^j 
\to \scr{E}^{\!}xt^{j-i+1}$ such that
$$
[xy, m]' = x [y, m]' + (-1)^{\deg y \deg x} y [x, m]'; 
$$
$$
[x, ym]' = [x, y] m + (-1)^{\deg y \deg x} y [x, m]'
$$
$$
[[x,y], m]' = [x, [y, m]']' + (-1)^{\deg y \deg x}[y, [x, m]']'
$$
\end{thm}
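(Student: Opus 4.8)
The plan is to deduce Theorem \ref{7} from Theorem \ref{BV} by combining the local constructions of Sections \ref{bvdif} and \ref{TE} with a gluing argument based on the independence of the bracket from the chosen deformations. First I would pass to a sufficiently fine affine open cover $\{U_i\}$ of $X$, writing $U_i=\Spec A_i$, $Y\cap U_i=\Spec B_i$ and $Z\cap U_i=\Spec C_i$. On each $U_i$ the coisotropy of $Y$ and $Z$ lets me invoke Proposition \ref{exist} to produce first order deformation cochains $\al_A,\al_B,\al_C$; after refining the cover so that the canonical bundles of $Y$ and $Z$ are Zariski-locally trivial on each piece, Proposition \ref{exist}(iii) also supplies second order extensions $\gamma_A,\gamma_B,\gamma_C$ compatible with the \emph{same} second order deformation of $A_i$. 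Moreover, by Proposition \ref{exist}(ii) I may take $\al_B$ and $\al_C$ to be bidifferential operators of order $\leq 1$ in their module arguments, which is exactly condition \eqref{2nd} of Lemma \ref{poisson}. The construction of Section \ref{bvdif} then yields the operator $\de=\de_\al$ on the complex $(T_\idot,d_T)$ computing $\Tor^{A_i}_\idot(B_i,C_i)$.

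Next I would record that $\de$ induces a Gerstenhaber algebra structure on each $U_i$ exactly as in the proof of Theorem \ref{BV}(i): Lemma \ref{operations}(1),(3) give $[d_T,\de]_+=0$ and that $\de^2$ is chain homotopic to zero via $\de_\gamma$, Lemma \ref{poisson} gives $\Xi\equiv 0$, and feeding these into the three identities of Lemma \ref{ident} shows that \eqref{bracket} descends to a degree $-1$ graded-symmetric bracket on $\Tor^{A_i}_\idot(B_i,C_i)$ satisfying the Poisson and Jacobi identities. To globalize, I invoke Theorem \ref{BV}(ii): the resulting bracket is independent of the deformations $\bb$ and $\cc$. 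Hence on each overlap $U_i\cap U_j$ the brackets built from the $U_i$-data and from the $U_j$-data coincide, so the local brackets agree on overlaps and glue to a single canonical bracket on the sheaf $\ttor^{\oo_X}_\idot(\oo_Y,\oo_Z)$, with the Gerstenhaber axioms holding sectionwise.

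For the module statement I would run the parallel construction on the complex $E^\hdot$ computing $\Ext$, using the transposed deformation $\cc'=\cc^t$, i.e.\ $\al'_C(c,a):=-\al_C(a,c)$. By Lemma \ref{poisson} this choice guarantees the mixed compatibility \eqref{poisson-BVmod} between $\de_\al$ and $\de'_\al$, which is precisely the order $\leq 2$ BV-module condition of the Introduction. The module analogue of \eqref{bracket} (with $\de'$ in place of $\de$) then defines the pairing $[\cdot,\cdot]'$, and the three displayed identities of Theorem \ref{7} are obtained by the same formal manipulation as in Lemma \ref{ident}, now carried out for the pair $(T_\idot,E^\hdot)$: one imposes that $\de^2$, $(\de')^2$ are chain homotopic to zero, that $[d_T,\de]_+=0$ and $[d_E,\de']_+=0$, and the vanishing of the relevant $\Xi$-type expressions, and then passes to (co)homology via Lemma \ref{lod}. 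Canonicity of $[\cdot,\cdot]'$ follows once more from the independence assertion in Theorem \ref{BV}(iii), applied over the cover $\{U_i\}$.

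The hard part will be the last of the three module identities, the Jacobi-type relation $[[x,y],m]'=[x,[y,m]']'+(-1)^{\deg y\deg x}[y,[x,m]']'$. Unlike the two Leibniz rules, it is quadratic in the differentials, so its error terms involve $\de^2$, $(\de')^2$ and the $\Xi$-expressions simultaneously; establishing it requires a careful module analogue of Lemma \ref{ident}(3) together with a check that each correction term either vanishes identically (by $\Xi\equiv 0$) or is a (co)boundary that dies on passing to $\Ext$. I expect the bookkeeping of these cancellations, rather than any single estimate, to be the principal technical obstacle.
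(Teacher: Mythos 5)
Your proposal is circular at its key step. You globalize by invoking Theorem \ref{BV}(ii) --- ``the resulting bracket is independent of the deformations $\bb$ and $\cc$'' --- but in the paper that independence assertion is not an available input: Section 4.3 is titled ``Proof of Theorem \ref{BV}(ii)-(iii)'' and its content is precisely Theorem \ref{7} and its proof. In other words, the statement you are asked to prove and the statement you cite are the same result, and the entire substance of the paper's argument is exactly the independence you take as a black box. The only independence result genuinely available before this point, Lemma \ref{operations}(2), is strictly weaker: it shows $\delta_{\tilde\alpha}-\delta_\alpha$ is a homotopy only when the adjustments of $(\alpha_A,\alpha_B)$ and of $(\alpha_A,\alpha_C)$ use the \emph{same} cochain $\beta_A$. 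On an overlap $U_i\cap U_j$ there is no reason such a common $\beta_A$ exists; the Remark inside the paper's proof stresses exactly this failure.

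What is missing is the paper's actual mechanism for gluing. Fix the global split deformation $\aa$, so that $(\alpha_A)_i=(\alpha_A)_j$ on overlaps. Using the injectivity of $H^\hdot_{A,B}\to H^\hdot(A,A)$ asserted in Proposition \ref{exist}, one adjusts by a coboundary so that $(\alpha_C)_i=(\alpha_C)_j$; one can \emph{not} simultaneously arrange $(\alpha_B)_i=(\alpha_B)_j$, but the residual difference $(\alpha_B)_i-(\alpha_B)_j$ is then a Hochschild $1$-cocycle, i.e.\ of the form $b\otimes a\mapsto b\,\xi(a)$ for a derivation $\xi$ of $A_{ij}$. The difference $\delta_i-\delta_j$ of BV operators is therefore induced by
$\tilde{\delta}(b \otimes a_1 \otimes \cdots \otimes a_n \otimes c) = b\,\xi(a_1) \otimes a_2 \otimes \cdots \otimes a_n \otimes c$,
and the decisive point is that such an operator, while nonzero, induces the \emph{zero} bracket on $\Tor$ because $\xi$ has order one. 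So the BV differentials genuinely disagree on overlaps; only the induced brackets glue --- which is why the theorem produces a canonical Gerstenhaber structure but not a canonical BV structure. None of this appears in your proposal. As a secondary point, you also misplace the difficulty in the module statement: the Jacobi identity for $[\cdot,\cdot]'$ is not the hard part --- the paper disposes of it in one sentence, since it follows formally from $(\delta')^2=0$ once the Poisson identities (equation \eqref{poisson-BVmod}) are established; the genuine care needed there is rather the passage to the polydifferential subcomplex of $E^\hdot$ so that the construction localizes.
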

\begin{proof} Let $\mathcal{A}$ be the standard (split)
algebra deformation of the structure sheaf $\oo_X$ given by
the formula $f*g=fg+\frac{\ep}{2}\{f,g\}$.

On $X$, we choose
an  affine open covering $\{U_i\}$ such that each open subset $U_i$ has
trivial canonical class.
By Section 2, on each $U_i$,
we can find deformations $\mathcal{B}_i, \mathcal{C}_i$ which extend
to second order deformations.
Writing $ A_i$, resp. $ B_i, C_i$, for the
 corresponding algebras of global sections and
applying Theorem \ref{BV}, we get a BV algebra
structure on 
$\Tor^{ A_i}_\idot( B_i,  
C_i)$. By   Lemma \ref{operations}(2),
the corresponding
BV differential $\delta_i$ on the $\Tor$ algebra 
is unaffected by a  change of cocycles $\alpha_A, \alpha_B,
\alpha_C$ provided neither 
 the cohomology class of $\alpha_A \oplus \alpha_B$
nor  the cohomology class of $\alpha_A \oplus \alpha_C$
is changed. 
In particular, we may insure that all the
cocycles involved satisfy the conclusions of Proposition \ref{exist}.

Next, fix a double intersection 
$U_i \cap U_j$. We must show that the
restrictions to $U_i \cap U_j$ of the two BV differentials
$\delta_i$ and $\delta_j$,  arising from
the triples $(\alpha_A)_i,\,(\alpha_B)_i,\,(\alpha_C)_i$
and 
$(\alpha_A)_j,\,(\alpha_B)_j,\,(\alpha_C)_j$
respectively,
induce the same Gerstenhaber bracket on the sheaf
${\scr{T}\!}or^{\oo_X}_\idot(\oo_Y,\oo_Z)|_{U_i \cap U_j}.$

To this end, we observe that the deformation
$\mathcal A$ being split and globally defined, on
$U_i \cap U_j$, we have $(\alpha_A)_i=(\alpha_A)_j$.
Next, consider the complex $C^\hdot_{A,B}$,
cf. \eqref{d_g}, associated with
the algebras $A_{ij}=\k[U_i \cap U_j]$
and $B_{ij}=\k[Y\cap U_i \cap U_j]$.
The injectivity claim
at the beginning of Proposition
\ref{exist} implies that
the cocycles $(\alpha_A)_i\oplus(\alpha_C)_i$
and 
$(\alpha_A)_j\oplus(\alpha_C)_j$ give equal
cohomology classes in $H^2(C^\hdot_{A,B})$. We conclude that there exist
cochains
$((\beta_A)_{ij}, (\beta_B)_{ij})$ such that, in $C^2_{A,B}$, we have 
\beq{C}
\big(0 \oplus (\alpha_C)_i - (\alpha_C)_j\big) = d_{A,B} ((\beta_A)_{ij}\oplus
(\beta_B)_{ij}).
\eeq
We see that adjusting the restriction of the
triple $(\alpha_A)_i, (\alpha_B)_i,
(\alpha_C)_i$ to $U_i \cap U_j$ by 
a coboundary allows to achieve that $(\alpha_C)_i = (\alpha_C)_j$.

\begin{rem} At this point, the reader should be alerted that,
although an equation similar to \eqref{C}
holds for the difference
$0 \oplus (\alpha_B)_i - (\alpha_B)_j$ as well,
that  equation may require 
a completely different choice of the
cocycle $(\beta_A)_{ij}$. Thus, on $U_i\cap U_j$,
the cocycles in the complex $C^\hdot_{A,B,C}$,
cf. \eqref{c_gh},
corresponding to
$(\alpha_A)_i\oplus(\alpha_B)_i\oplus(\alpha_C)_i$
and 
$(\alpha_A)_j\oplus(\alpha_B)_j\oplus(\alpha_C)_j$
respectively, do not necessarily give equal
cohomology classes,
in general.
\end{rem}

The arguments above show that we may assume,
adjusting by coboundaries if necessary,
that we have $(\alpha_A)_i=(\alpha_A)_j$
and $(\alpha_C)_i=(\alpha_C)_j$
(but not also $(\alpha_B)_i=(\alpha_B)_j$ at the same time).
Further, let   $C^\hdot_h$
be the deformation complex for the pair $(A,B)$,
where $h: A\to \End B$ is
the obvious homomorphism.
Taking the difference
of the cocycles $(\alpha_A)_i\oplus(\alpha_B)_i$
and $(\alpha_A)_j\oplus(\alpha_B)_j$
yields a cocycle in  $C^2_h$ of the form
$0\oplus (\alpha_B)_i - (\alpha_B)_j$.
Since the first component here
is $0$, the cocycle condition  says, cf. \eqref{d_g},
that $(\alpha_B)_i - (\alpha_B)_j:\ B\o A\to B$
is a Hochschild 1-cocycle.
But any Hochschild 1-cocycle has the form $(b \otimes a) \mapsto 
b\cdot \xi(a)$ for a certain derivation $\xi: A_{ij} \to A_{ij}$. Thus, 
the difference of two BV differentials $\delta_i - \delta_j$ on 
$U_i \cap U_j$ is induced by the operator
$$
\tilde{\delta}(b \otimes a_1 \otimes \ldots \otimes a_i \otimes c) = 
b\cdot  \xi(a_1) \otimes a_2 \otimes \ldots \otimes a_i \otimes c 
$$

It is straightforward to verify that, since $\xi$ is
a first order differential operator,
any operator $\tilde{\delta}$ as above
induces the zero bracket on $\Tor$. Hence the 
brackets induced by $\delta_i$ and $\delta_j$ agree on $U_i\cap U_j$. 
This means that the Gersenhaber bracket is independent on the local choice 
of $\alpha_B$ and $\alpha_C$, which  finishes the proof
of the first part of the theorem.

We turn to the second part  of the theorem concerning
Gerstenhaber modules.
Observe that the module bracket can be defined via
$$
[x, m]' = \delta'(xm) - \delta(x) m - (-1)^{\deg x}x \delta'(m).
$$

Since arbitrary $\C$-linear maps $B \otimes A \to B$, etc., 
do not localize in general, we need to work with the subcomplex of $E^\hdot$
given by polydifferential operators (by the last remark of Section 2 this
subcomplex has the same cohomology). Observe that our deformations of $Y$
and $Z$ are indeed given locally by bidifferential operators.

Now, both versions of Poisson identity are equivalent to
\eqref{poisson-BVmod}.
The Jacobi identity on the module follows from $(\delta')^2 = 0,$ 
once the Poisson identity is established. The remaning part of the proof is
the same as for $\Tor$. 
\end{proof}

\section{Examples.}\label{Exam}

The proof of the previous Theorem shows that sometimes the BV structure 
on $\Tor$ or $\Ext$ is well-defined globally. 
This is the case, for instance, whenever
in the setting of the proof of Theorem
\ref{7},
the cocycles  $\alpha_C$ \textit{and} 
$\alpha_B$,  are defined globally.

Here is one such example.

\subsection{Koszul bracket} 
Take $X = Y \times Y$ and view $Y\sset X$ as the diagonal. 
Then, one easily finds
$$
{\scr{T}\!}or^{\mathcal{O}_X}_\idot(\mathcal{O}_Y, 
\mathcal{O}_Y)\simeq  \Omega^\hdot_{Y}; \qquad
{\scr{E}\!}xt^\hdot_{\mathcal{O}_X} 
(\mathcal{O}_Y, \mathcal{O}_Y)  \simeq \La^\hdot 
\Omega^\vee_{Y}.
$$

\begin{prop}
For any Poisson bivector field $P \in H^0(Y, \La^2T_Y)$,
the induced BV differential $\delta$ on $\Omega^\hdot_{B/k}$ is given by 
the formula $\delta = i_P\, d_{DR} + d_{DR}\, i_P$,
where the
de Rham differential $d_{DR}$ 
has degree $+1$, and where $i_P$
is a degree $(-2)$ contraction operation (with 
the bivector $P$). The BV differential $\delta'$ on 
$\La^\hdot \Omega^\vee_{B/k}$ is given by the Schouten bracket
$[P, -]$.
\end{prop}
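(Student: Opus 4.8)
The plan is to combine the Koszul description of the self-intersection of the diagonal with the explicit bar-model computation of $\delta$ and $\delta'$ from Section \ref{TE}, and then to recognize the resulting operators geometrically. First I would fix the Poisson data: since $P\in H^0(Y,\La^2 T_Y)$ lives on $Y$, the relevant bivector on $X=Y\times Y$ is $\pi=P_1-P_2$ ($P$ on the first factor minus $P$ on the second), and a short check shows the diagonal $\Delta\cong Y$ is then coisotropic, since $\langle\pi, dz^i\wedge dz^j\rangle$ vanishes on $\Delta$, where the $z^i=x^i-\tilde x^i$ cut out $\Delta$. The conormal bundle is $N^\vee_{X/\Delta}\cong\Omega^1_Y$ via $z^i\mapsto dy^i$, so the Koszul resolution of $\oo_\Delta$ gives $\Tor^{\oo_X}_i(\oo_\Delta,\oo_\Delta)\cong\La^i(I/I^2)\cong\Omega^i_Y$ and dually $\Ext^i_{\oo_X}(\oo_\Delta,\oo_\Delta)\cong\La^i N_{X/\Delta}\cong\La^i T_Y$. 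Under these isomorphisms the shuffle product of Lemma \ref{lod} becomes the wedge of forms, resp.\ of polyvectors, so only the differentials $\delta,\delta'$ remain to be identified.

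Next I would reduce to the affine local model as in the proof of Theorem \ref{7} and choose convenient cocycles. The cleanest choice realizes the split deformation $\aa$ locally as $\aa_Y\otimes\aa_Y^{\mathrm{op}}$, where $\aa_Y$ is the first order star deformation $f*g=fg+\tfrac{\eps}{2}\{f,g\}$ of $\oo_Y$; the module $\cc$ deforming $\oo_\Delta$ is the diagonal bimodule $\aa_Y$ itself, with $\cc'=\cc^t$. This produces explicit bidifferential cocycles $\alpha_A$ (the bracket of $\pi$) and $\alpha_B,\alpha_C$ of bidegrees $(1,1)$ and $(0,2)$ as in Proposition \ref{exist}(ii), together with $\alpha'_C=\alpha_C^t$. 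In this model one has the further identifications $\Tor^{\oo_X}_\idot(\oo_\Delta,\oo_\Delta)\cong HH_\idot(\aa_Y)$ and $\Ext^\hdot_{\oo_X}(\oo_\Delta,\oo_\Delta)\cong HH^\hdot(\aa_Y)$, under which $\delta$ and $\delta'$ become the first order parts of the Hochschild boundary, resp.\ coboundary, of the deformed algebra.

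I would then carry out the two computations. For $\Ext=\La^\hdot T_Y$ the assertion is the classical Gerstenhaber fact that a first order deformation with Kodaira--Spencer class $[\alpha_A]$ induces on Hochschild cohomology the Gerstenhaber bracket with that class; under the Hochschild--Kostant--Rosenberg identification $HH^\hdot(\oo_Y)\cong\La^\hdot T_Y$ the class of $\alpha_A$ is $P$ and this bracket is the Schouten bracket, whence $\delta'=[P,-]$. For $\Tor=\Omega^\hdot_Y$ I would feed $\alpha_A,\alpha_B,\alpha_C$ into the formula for $\delta_\alpha$ of Section \ref{TE}, transport it to the Koszul model, and apply the antisymmetrization map $\alt$ of Proposition \ref{exist}: the internal insertions of $\alpha_A$ assemble, after $\alt$, into the contraction $i_P$, while the outer bar terms $b\,a_1$ and $a_i\,c$ coming from the module actions assemble into the de Rham differential $d_{DR}$ (these being the Connes boundary under HKR). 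Combining them yields $\delta=i_P\,d_{DR}+d_{DR}\,i_P$, the Koszul--Brylinski generator of the Koszul bracket, up to the sign conventions fixed in the statement. Independence of the choices, guaranteed by Theorem \ref{7}, then justifies using this particular local model and gluing to the global sheaf statement.

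The main obstacle I expect is the homology computation: matching $\alt\circ\delta_\alpha$ with $i_P\,d_{DR}+d_{DR}\,i_P$ requires careful sign- and normalization-bookkeeping, so as to see the de Rham differential emerge from the outer bar terms (the Connes-to-de Rham comparison under HKR) and the contraction $i_P$ from the Poisson insertions. The cohomology side $\delta'=[P,-]$ is comparatively routine, being the standard statement that the deformation differential on $HH^\hdot$ is the bracket with the deformation class.
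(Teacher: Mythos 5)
Your proposal is correct and follows essentially the same route as the paper's own proof: reduce to the affine model $A=B\otimes B$, $C=B$ with the product bivector $P\oplus(-P)$ (your diagonal-bimodule cocycles for $\aa_Y\otimes\aa_Y^{op}$ expand to exactly the paper's explicit $\alpha_A$ and $\alpha_B$, with the transposed module in the second slot), transport $\delta_\alpha$ to $\Omega^\hdot_{B/k}$ through the bar-complex/HKR antisymmetrization quasi-isomorphisms, and identify the result with the Koszul operator by a direct (sign-heavy) computation, which is precisely the ``straightforward computation'' the paper also defers. The only cosmetic difference is that you settle the $\scr{E}^{\!}xt$ half by citing the classical fact that a first order deformation induces on $HH^\hdot$ the Gerstenhaber bracket with its deformation class (hence the Schouten bracket $[P,-]$ under HKR), where the paper performs the analogous direct computation; both are legitimate, though note your heuristic matching (internal $\alpha_A$-insertions $\leftrightarrow i_P$, outer module terms $\leftrightarrow d_{DR}$) is not literal, since the two decompositions mix under the HKR map and only the total operators agree.
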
 
\noindent
\begin{proof} To simplify notation we will work in the affine case
although all formulas make sense globally. Thus we consider
$A = B \otimes B$ with the quotient map $m: B \otimes B \to B$
given by the product. Also, take $C = B$. By standard 
results, e.g. \cite{MR1269324}, we have a Hochschild cocycle on $A$:
$$
\alpha_A(x \otimes x', y \otimes y') = \frac{1}{2}P(dx \wedge dy) 
\otimes x' y ' - xy \otimes \frac{1}{2}P(dx' \wedge dy')
$$
and the diagonal $Y_\Delta  \subset X = Y\times Y$ is coisotropic 
with respect to the corresponding Poisson structure. We also have a 
right deformation of $B$ induced by
$$
\alpha_B(b, x \otimes y) = - \frac{1}{2}P (d(bx) \wedge dy) + 
\frac{1}{2}P (db \wedge dx) y:
B\otimes A \to B
$$
For the second argument of $\Tor^A_\idot(\cdot, \cdot)$ we use the
transposed map $\alpha_B^t: A\otimes B \to B$. Our goal is to compute
the induced BV differential on $\Omega^\hdot_{B/k}$. To that end, we
need explicit quasi-isomorphisms between $\Omega^\hdot_{B/k}$
and $T_\idot$.

Observe that usually $\Omega^\hdot_{B/k}$ is identified with 
the cohomology of $C_\idot(A, A) = A^{\otimes (\hdot + 1)}$ and the 
standard Hochschild differential. Our complex $T_\idot$ is slightly 
different although quasi-isomorphic to $C_\idot(A, A)$ by the map:
$$
b \otimes a_1 \otimes \ldots \otimes a_n \otimes b' 
\mapsto (b' b) \otimes m(a_1) \otimes \ldots \otimes m(a_n)
$$ 
Combining we the Hochschild-Kostant-Rosenberg
 isomorphism we have a pair of mutually inverse 
quasi-isomorphisms:
$$
b \otimes a_1 \otimes \ldots \otimes a_n \otimes b' \mapsto 
\frac{1}{n!} b'b \cdot d m(a_1) \wedge \ldots \wedge dm (a_n): 
T_\idot \to \Omega^\hdot_{B/k}
$$
and the map 
$$
b_0 d b_1 \mapsto b_0 \otimes (b_1 \otimes 1)  \otimes 1: \Omega^1_{B/k}
\to T_1
$$
extended multiplicatively. For example, $b_0 db_1 \wedge db_2 \wedge db_3$
maps to the antisymmetrization of 
$$
b_0 \otimes (b_1 \otimes 1) \otimes (b_2 \otimes 1) \otimes (b_3 \otimes 1) 
\otimes 1 \in T_3
$$
in the three middle terms. The assertion follows from the above definitions
of $\alpha_A$ and $\alpha_B$ and a straightforward computation.

\medskip
\noindent
The case of $\delta'$ is entirely similar. 
\end{proof}

\bigskip
\noindent
We observe here that the differential $\delta$ of the above proposition 
was first constructed from a Poisson bivector $P$ by Koszul in \cite{MR837203}.
Also, the differential $\delta$ (and not just the induced Gerstenhaber bracket)
is canonically defined since the two arguments of $\Tor^A_\idot(B, B)$
are taken with their conjugate deformations. 
We also remark that the (co)homology of the differentials $\delta'$ and
$\delta$ in this case are called Poisson cohomology and homology, 
respectively.

\subsection{Self-intersection of a coisotropic submanifold} 
Let $X$ be an arbitrary Poisson variety, and let $Y=Z$,
a coisotropic subvariety.
In this case, we have
$\scr{E}^{\!}xt^\hdot_{O_X} (\mathcal{O}_Y, \mathcal{O}_Y) = 
\La^\hdot (N_{X/Y})$. 

The proof of Theorem \ref{7} shows that 
the differential $\delta'$ is well-defined globally: although 
$\alpha_B(b, a)$ exist in general only locally, on double intersections
of an affine cover the difference between two choice of $\alpha_B$
is a coboundary (since $B= C$, where $B$ is the
affine counterpart of $\oo_Y$ and
$C$  is the
affine counterpart of $\oo_Z$, as usual).
Hence these two choices give
the same $\delta'$ on cohomology. 
The subsheaf  $\mathcal{K} \subset \La^\hdot(T_X)$ 
formed by vector fields which project to zero in $\La^\hdot(N_{X/Y})$,
is a Lie subalgebra with respect to the Schouten bracket. 
Then $\mathcal{K}$  
acts on $\La^\hdot(N_{X/Y})$  since any Lie subalgebra acts on 
a quotient by itself. 

\begin{prop}
The BV differential on $\La^\hdot(N_{X/Y})$
is given by the action of the Poisson bivector.
\end{prop}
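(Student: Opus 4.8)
The plan is to reduce to an explicit local computation, exactly as in the previous proposition for the diagonal case, but now keeping track of the coisotropic condition globally. Since by the proof of Theorem \ref{7} the differential $\delta'$ on $\scr{E}^{\!}xt^\hdot_{\oo_X}(\oo_Y,\oo_Y)\simeq\La^\hdot(N_{X/Y})$ is independent of the local choice of $\alpha_B$ (and here $B=C$, so the two local choices differ by a coboundary), it suffices to work on a single affine chart $U=\Spec A$ with $Y\cap U=\Spec B$ and to pick one convenient local deformation $\alpha_B$. First I would invoke Proposition \ref{exist}(ii) to choose $\alpha_B$ given by a bidifferential operator of the explicit form constructed there, namely $\alpha_B(b,a)=\rho(db,p(da))-\tfrac12 P(q(da|_Y),q(db))+b\,\psi(p(da))$, and then take its transpose $\alpha_B^t$ for the second argument, as dictated by the transposed-deformation recipe of Section \ref{transposed}.

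The key computation is to trace the operator $\delta'_\alpha$ on the Ext-complex $E^\hdot=\Hom_\k(B\otimes A^{\otimes\hdot},C)$ through the Hochschild--Kostant--Rosenberg identification of $H^\hdot(E^\hdot)$ with $\La^\hdot N_{X/Y}$. Concretely, I would write out $\delta'_\alpha$ on a cochain representing an element of $\La^k N_{X/Y}$, restrict all the $A$-arguments to the ideal $I$ of $Y$ (as in the $\alt$ construction in the proof of Proposition \ref{exist}), and antisymmetrize. The terms involving $\alpha_A$ and the $\psi$-part of $\alpha_B$ contribute the Schouten-bracket action of $P$, while the $\rho$- and $P(q,q)$-pieces either vanish on restriction to $I$ or cancel by antisymmetrization. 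The outcome should be precisely the action of $P$ through the Lie subalgebra $\mathcal{K}\subset\La^\hdot(T_X)$ of vector fields projecting to zero in $\La^\hdot(N_{X/Y})$: since $Y$ is coisotropic, $P$ lies in $\mathcal{K}$, and its Schouten-bracket action descends to the quotient $\La^\hdot(N_{X/Y})$, giving the claimed formula.

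The main obstacle I anticipate is \emph{well-definedness} rather than the raw calculation: I must check that the operator induced on $\La^\hdot N_{X/Y}$ genuinely factors through the projection $\La^\hdot T_X\onto\La^\hdot N_{X/Y}$, i.e. that replacing the representing bivector $P$ by $P$ plus an element of $\mathcal{K}'$ (vector fields tangent to $Y$, which die in $N_{X/Y}$) does not change $\delta'$ on cohomology. This is exactly where the coisotropy hypothesis is used twice --- once to guarantee $P\in\mathcal{K}$ so that the action on the quotient is defined (a Lie subalgebra acts on its quotient), and once to ensure the ambiguity in $\alpha_B$ is a genuine Hochschild coboundary, so that the induced map on Ext-cohomology is canonical. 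I would handle this by the general principle already invoked in the text: any Lie subalgebra acts on the quotient by itself, so the Schouten action of $P\in\mathcal{K}$ on $\La^\hdot(T_X)/\mathcal{K}=\La^\hdot(N_{X/Y})$ is automatically well-defined, and matching this intrinsic action with the explicit HKR-transported $\delta'$ completes the proof.
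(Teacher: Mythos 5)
Your overall strategy is viable and lands on the right answer, but it is a genuinely more computational route than the paper's. After the same reduction to an affine chart (justified, as you say, by the canonicity of $\delta'$ and the fact that $B=C$ here), the paper's proof is structural: it observes that $\Ext^\hdot_A(B,B)$ is computed by the subcomplex $C^{\hdot-1}(A,\End_\C B)\subset C^\hdot_{A,B}$ of the deformation complex (which, for $C=B$, is identified with your complex $E^\hdot$), and that the operator $\delta'$ is then nothing but the adjoint action $\gamma_B\mapsto[\alpha_A\oplus\alpha_B,\,0\oplus\gamma_B]$ for the DG Lie bracket of Section \ref{def_comp_sec}. Under the HKR map $\alt$ (restriction to $I$ plus antisymmetrization) the adjoint action of the deformation cocycle becomes the Schouten action of $P$ on $\La^\hdot N_{X/Y}$, so no explicit formula for $\alpha_B$ is ever needed. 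You instead expand $\delta'_\alpha$ term by term on $E^\hdot$ using the explicit bidifferential $\alpha_B$ of Proposition \ref{exist}(ii), and must then check by hand that the $\rho$- and $P(q,q)$-contributions die under restriction to $I$ and antisymmetrization; that cancellation is exactly what the Lie-bracket formulation makes automatic, and in your sketch it is asserted rather than verified. Your well-definedness discussion (coisotropy gives $P\in\mathcal{K}$, and a Lie subalgebra acts on the quotient by itself) is correct but duplicates the paper's preamble to the proposition rather than supplying a missing step: once $\delta'$ is known to be canonical and locally computed, matching it with the intrinsic action of $P$ is the whole content.

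One concrete slip to fix: for the Ext-differential \emph{both} arguments must carry \emph{right}-module deformations, so you cannot feed the transpose $\alpha_B^t$ into the second slot of $\delta'_\alpha$ --- the transpose of a right-module deformation is a \emph{left}-module deformation $A\otimes B\to B$, and the formula for $\delta'_\alpha$ does not even typecheck with it. The recipe you are copying from the Koszul-bracket example applies to $\Tor$, whose second argument is a left module. What Theorem \ref{BV}(iii) actually dictates here, with $\cc=\bb^t$ and $\cc'=\cc^t$, is the double transpose, i.e.\ $\alpha_B$ itself; this is precisely where $B=C$ is used, and it is what the paper's proof implicitly does (the term $\gamma_B\cup\alpha_B$ in the bracket is the $\alpha'_C$-term with $\alpha'_C=\alpha_B$). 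With that correction your computation goes through.
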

\noindent
\begin{proof} It suffices to prove the statement on an affine open
subset. Observe that $\Ext^\hdot_A(B, B)$
is computed by the subcomplex 
$$
C^{\hdot -1}(A, \End_\C B) \subset 
C^\hdot_{A,B}.$$ 
The differential $\delta'$ 
of $\gamma_B: A^{\otimes (n -1)} \to \End_\C B$ is explicitly given 
by 
$$
\delta'(\gamma_B) = [\alpha_A \oplus \alpha_B, 0 \oplus \gamma_B]:
A^{\otimes n} \to \End_\C B
$$
where $[\cdot, \cdot]$ is the Lie bracket on $C^n_{A,B}$ 
introduced in Section 2.2. Now a straightforward calculation finishes
the proof. 
\end{proof}

\subsection{Symplectic 2-form need not be locally exact}
\label{sasha}
The following construction of a large class of
examples of algebraic closed differential forms
which are not  locally exact in \`etale topology
was explained to us by A. Beilinson.

Let $X$ be a smooth algebraic
variety, and $\omega$ a non-zero
holomorphic $i$-form on $X$.
 We assume, in addition,  that 
for some compactification of $X$
the form $\omega$ has logarithmic singularities at infinity (it is
then automatically closed
thanks to Deligne results \cite{De}). In any case, one can merely assume that $X$ is
itself a projective variety.

\begin{claim} For any \`etale morphism $\pi: U\to X$, the $i$-form 
$\omega_U=\pi^*\omega$, on $U$, is \textit{not} exact.
\end{claim}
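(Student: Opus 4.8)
Let me understand what's being claimed. We have a smooth algebraic variety $X$ with a nonzero holomorphic $i$-form $\omega$, which either has logarithmic singularities at infinity for some compactification (hence is closed by Deligne), or $X$ is projective. The claim is that for ANY étale morphism $\pi: U \to X$, the pullback $\omega_U = \pi^*\omega$ is NOT exact.

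So I need to show $\omega_U$ is not exact as an algebraic differential form. This is the key point that distinguishes algebraic from analytic geometry—in the analytic (Hausdorff) topology, closed forms are locally exact by the Poincaré lemma, but algebraically this fails.

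The main tool here should be Hodge theory / Deligne's theory of mixed Hodge structures. The key fact is that for a smooth projective variety (or more generally, with logarithmic poles at infinity), holomorphic forms are never exact. Let me think about why.

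For a smooth projective variety $X$, the global holomorphic $i$-forms $H^0(X, \Omega^i_X)$ inject into algebraic de Rham cohomology $H^i_{dR}(X)$ — this is because the Hodge-to-de Rham spectral sequence degenerates. A nonzero holomorphic form represents a nonzero cohomology class, so it cannot be exact (exact would mean zero class).

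The subtlety is passing to the étale cover $U$. Even though $\omega$ is a nonzero class on $X$, I need $\pi^*\omega$ to remain nonzero on $U$. Here's where I'd use a trick: étale morphisms are "locally" finite in a suitable sense, and more to the point, if $\omega_U = d\eta$ were exact on $U$, I should be able to derive a contradiction by some averaging or norm argument, OR by using that étale pullback is injective on relevant cohomology.

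**Approach.**

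The plan is to reduce to the projective case and use Hodge theory. First I would observe it suffices to treat the case where $X$ is smooth projective — the logarithmic-singularities hypothesis reduces to this via Deligne's theory on the open complement, where holomorphic forms with log poles compute the appropriate graded piece of the mixed Hodge structure on $H^i$. So assume $X$ is smooth projective of dimension $n$.

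Next, the main point: on a smooth projective variety the natural map $H^0(X, \Omega^i_X) \to H^i_{dR}(X)$ is injective. This follows from the degeneration of the Hodge-de Rham spectral sequence at $E_1$ (Deligne), which identifies $H^0(X,\Omega^i_X)$ with the Hodge piece $F^i H^i_{dR}(X) = H^{i,0}$, a subspace of $H^i_{dR}(X)$. In particular, a nonzero holomorphic form $\omega$ represents a nonzero de Rham class, hence is not exact on $X$ itself. The remaining and genuinely nontrivial step is the passage to an arbitrary étale cover $\pi: U \to X$.

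**The main obstacle.**

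The hard part will be handling a general étale morphism $\pi: U \to X$, which need not be finite or proper, so $U$ need not be projective and I cannot directly invoke Hodge theory on $U$. The strategy I would pursue: suppose for contradiction $\pi^*\omega = d\eta$ for some algebraic $(i-1)$-form $\eta$ on $U$. I would like to "push this down" to $X$ to contradict non-exactness of $\omega$ on $X$. One clean way is to replace $\pi$ by a finite étale morphism: any étale $\pi$ factors, Zariski-locally on the source, through a standard étale map, but more usefully, I can restrict to a suitable affine or find a dense open over which things are controlled. Alternatively, since $\omega$ extends with log poles to a compactification, I would argue that $\pi^*\omega$ also extends with log poles to a suitable compactification $\bar U$ of $U$, and then apply the projective/log case directly to $U$: a nonzero logarithmic form on a smooth variety admitting a good compactification represents a nonzero class in the first graded piece $\mathrm{Gr}_F^i H^i_{dR}(U)$ of the mixed Hodge structure, by Deligne's theory, and hence is not exact. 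The crux is therefore to verify that $\pi^*\omega$ remains \emph{nonzero} as a holomorphic form on $U$ (immediate, since $\pi$ is étale hence $\pi^*$ on forms is injective on each fiber where $\pi$ is surjective onto its image) and that it still has logarithmic singularities at infinity after compactifying $U$; this last point, controlling the behavior at the boundary of a compactification of the étale cover, is where the real work lies and would invoke Deligne's functoriality of the log de Rham complex together with resolution of singularities to produce a normal-crossings compactification $\bar U$ extending $\bar X$.
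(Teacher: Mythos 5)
Your proposal, in its final form, is correct and follows essentially the same route as the paper: use Hironaka to produce a normal-crossings compactification of $U$ over which $\pi$ extends, observe that $\pi^*\omega$ is nonzero and acquires logarithmic poles at the boundary, and invoke Deligne's injectivity of logarithmic forms into the de Rham cohomology $H^i(U)$ to conclude $[\omega_U]\neq 0$. The preliminary ``reduction to the projective case'' in your plan is never actually used and can be dropped; the log-pole argument you settle on handles both hypotheses at once, exactly as in the paper.
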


\begin{proof} Let $X\into Z$ be a compactification 
such that $Z\setminus X$ is a divisor with normal crossings. By
Hironaka's resolution of singularities, one
 can find a similar compactification $U\into T$ 
such that the map $\pi: U\to X$ extends to a morphism $T\to Z$. 
It is known that in such a case,
the $i$-form $\omega_U$ has logarithmic
 singularities at $T\smallsetminus U$.
Furthermore, this form is clearly
non-zero since its restriction to $U$ is non-zero.
The map sending a differential
form $\beta$ on $T$ with logarithmic singularities at $T\smallsetminus U$
to $[\beta]\in H^i (U)$,
the corresponding de Rham
 cohomology class,
 is known to be {\em injective}, by [De].
So, the class  $[\omega_U] \in H^i (U)$ is non-zero.
We conclude that $\omega_U$ can not be an exact form,
as claimed.
\end{proof}

The above result produces examples of smooth symplectic varieties
(e.g. $X$  an abelian surface) such that the corresponding
 symplectic 2-form cannot be made exact by passing to any
 \`etale
open subset.

\subsection*{Acknowledgments.}
{\small{We are grateful to  Vasiliy Dolgushev for his
suggestion to use the shuffle product and
to Sasha Beilinson for the construction
reproduced in \S\ref{sasha}. We are
indebted to Lev Rozansky for many patient explanations
of his ongoing (partially unpublished) work with Anton Kapustin.

The work of the first author was supported in part by the
Sloan Research Fellowship and the work of the second author
 was  supported in part by the NSF   grant  DMS-0601050.}}

\noindent
\textit{Addresses: \\
Department of Mathematics, University of  California at Irvine, Irvine, CA 92697, USA;\\ Department of Mathematics, University of Chicago,  Chicago, IL 60637, USA.}

\bigskip
\noindent
\textit{Email: vbaranov@math.uci.edu; ginzburg@math.uchicago.edu}


\begin{thebibliography}{10}

\bibitem{BGP}
V.~Baranovsky, V.~Ginzburg, and J.~Pecharich, \emph{Deformation of line bundles
  on coisotropic subvarieties} (2009). Preprint arXiv:0909.5520.

\bibitem{BF}
K.~Behrend and B.~Fantechi, \emph{Gerstenhaber and Batalin-Vilkovisky
  structures on Lagrangian intersections}, in Algebra, Arithmetic and Geometry:
  Manin Festschrift, Volume I., Vol. 269 of \emph{Progress in Mathematics},
  Birkh\"auser (2009).

\bibitem{De}
P.~Deligne, \emph{Th\'eorie de {H}odge. {II}}, Inst. Hautes \'Etudes Sci. Publ.
  Math.  (1971), no.~40,  5--57.

\bibitem{FMY}
Y.~Fr\'egier, M.~Markl, and D.~Yau, \emph{The $L_\infty$-deformation complex of
  diagrams of algebras}, New York Journal of Math  (2009), no.~15,  353--392.

\bibitem{MR704600}
M.~Gerstenhaber and S.~D. Schack, \emph{On the deformation of algebra morphisms
  and diagrams}, Trans. Amer. Math. Soc. \textbf{279} (1983), no.~1,  1--50.

\bibitem{MR0238860}
A.~Grothendieck, \emph{\'{E}l\'ements de g\'eom\'etrie alg\'ebrique. {IV}.
  \'{E}tude locale des sch\'emas et des morphismes de sch\'emas {IV}}, Inst.
  Hautes \'Etudes Sci. Publ. Math.  (1967), no.~32,  361.

\bibitem{KR}
A.~Kapustin and L.~Rozansky, \emph{Three-dimensional topological field theory
  and symplectic algebraic geometry II} (2009). Preprint arXiv:0909.3643.

\bibitem{MR2062626}
M.~Kontsevich, \emph{Deformation quantization of {P}oisson manifolds}, Lett.
  Math. Phys. \textbf{66} (2003), no.~3,  157--216.

\bibitem{MR837203}
J.-L. Koszul, \emph{Crochet de {S}chouten-{N}ijenhuis et cohomologie},
  Ast\'erisque  (1985), no. Numero Hors Serie,  257--271. The mathematical
  heritage of {\'E}lie Cartan (Lyon, 1984).

\bibitem{Lo}
J.-L. Loday, Cyclic homology, Vol. 301 of \emph{Grundlehren der Mathematischen
  Wissenschaften [Fundamental Principles of Mathematical Sciences]},
  Springer-Verlag, Berlin, second edition (1998), ISBN 3-540-63074-0. Appendix
  E by Mar{\'{\i}}a O. Ronco, Chapter 13 by the author in collaboration with
  Teimuraz Pirashvili.

\bibitem{O}
D.~O. Orlov, \emph{Triangulated categories of singularities and {D}-branes in
  {L}andau-{G}inzburg models}, Tr. Mat. Inst. Steklova \textbf{246} (2004), no.
  Algebr. Geom. Metody, Svyazi i Prilozh.,  240--262.

\bibitem{MR1269324}
C.~A. Weibel, An introduction to homological algebra, Vol.~38 of
  \emph{Cambridge Studies in Advanced Mathematics}, Cambridge University Press,
  Cambridge (1994), ISBN 0-521-43500-5.

\end{thebibliography}
\end{document}